\theoremstyle{plain}
\newtheorem{theorem}{Theorem}[section]
\newtheorem{cor}[theorem]{Corollary}
\newtheorem{def-thm}[theorem]{Definition-Theorem}
\newtheorem{lemma}[theorem]{Lemma}
\newtheorem*{tha}{Theorem A}
\newtheorem*{thb}{Theorem B}
\newtheorem*{thc}{Theorem C}
\theoremstyle{definition}
\begin{document}
\title[On  Picard's  Problem via Nevanlinna Theory]{On  Picard's  Problem via Nevanlinna Theory}
\author[X.-J. Dong]
{Xianjing Dong}

\address{School of Mathematical Sciences \\ Qufu Normal University \\ Qufu, Jining, Shandong, 273165, P. R. China}
\email{xjdong05@126.com}


\subjclass[2020]{32H30; 32H25; 32A22} 
\keywords{Nevanlinna theory; value distribution; Picard's theorem;  defect relation;  K\"ahler manifolds}
\date{}
\maketitle \thispagestyle{empty} \setcounter{page}{1}

\begin{abstract}   We  consider   the classical  Picard's problem  for  non-parabolic  complete K\"ahler manifolds  with non-negative Ricci curvature. 
Based on the global Green function approach, we give   a positive answer to   Picard's problem under certain  condition  
by developing   Nevanlinna theory. That is,   we prove   that every meromorphic function on such a manifold 
  reduces to   a constant if it omits three distinct values, provided that the manifold satisfies a  volume growth condition; and  prove  that every meromorphic function of non-polynomial type growth  on such a manifold can avoid 2 distinct values at most. 
   \end{abstract}

\setlength\arraycolsep{2pt}
\medskip

\vskip\baselineskip

\section{Introduction}

\subsection{Motivation}~

The famous  Picard's  theorem asserts  that 
a meromorphic function  on the complex Euclidean spaces must reduce to a constant if it omits three distinct values.  
Historically,  many authors  have been committed  to extensions of    this theorem for a long time (see \cite{Ad, Fuj,  Gr1, Gold0, Gold, Kob, Pet}). 
The study of Picard's problem on a manifold may have originated from 
 the celebrated work due to S. T. Yau \cite{Yau} 
  on   Liouville’s theorem,  which  
   proves   the Liouville's   property of  harmonic functions on   
  Ricci non-negatively   curved  manifolds. Yau's work  inspires  people  to think about        
   the following    Picard's problem.     Let $(M, g)$ be a complete noncompact K\"ahler manifold with   non-negative Ricci curvature 
     (see   examples  for such manifolds    in Sha-Yang \cite{Y-S} and Tian-Yau \cite{Y-T, T-Y}). 
        
             The classical   Picard's problem  states     that   

\noindent\textbf{Picard's Problem.}   \emph{Is every meromorphic function  on $M$ necessarily  a  constant  if it omits $3$ distinct values$?$}

\noindent\textbf{Remark 1.}\ 
Let $p: \mathbb D\to\mathbb P^1(\mathbb C)\setminus\{0,1,\infty\}$ be the universal covering, where $\mathbb D$ 
is  the unit disc in $\mathbb C.$ Thanks to the Liouville’s theorem by  Yau, 
one may give  an affirmative  answer to the Picard's problem
   if  every  holomorphic mapping $f: M\to\mathbb P^1(\mathbb C)\setminus\{0,1,\infty\}$
 can  lift     to a holomorphic mapping $\tilde f: M\to\mathbb D$ by $p.$  
   Using  the mapping lifting theorem, 
      $f$ admits  a lifting  $\tilde f$  if and only if $$f_*\big(\pi_1(M, x_0)\big)\subseteq p_*\big(\pi_1(\mathbb D, \tilde y_0)\big)$$ for some   $x_0\in M$ 
         such   that 
  $p(\tilde y_0)=y_0=f(x_0).$ 
  Since $\mathbb D$ is  simply-connected,   the fundamental group   $\pi_1(\mathbb D, \tilde y_0)$ is trivial. 
  This implies  that $f_*(\pi_1(M, x_0))$ is
    necessarily a 
   trivial group 
   if  such a lifting $\tilde f$ admits.
         However,  $f_*(\pi_1(M, x_0))$ is   not   trivial  in   most cases. 
          Thus,  we cannot lift $f$ to $\tilde f$ in general. 

An  early  result  in this direction  could   be dated  back to 1970, 
S. Kobayashi \cite{Kob} obtained     
some  Picard-type theorems for  holomorphic mappings between certain complex
manifolds.  In particular,  he showed that 

\begin{tha}[Kobayashi, 1970] Assume that  there is a complex Lie group acting  on $M$
transitively.  Then,  
every  meromorphic function   on $M$  must be  a constant if  it omits $3$ distinct values.
\end{tha}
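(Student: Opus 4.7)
The plan is to use Kobayashi hyperbolicity: the target $\mathbb{P}^1(\mathbb{C})\setminus\{0,1,\infty\}$ is hyperbolic, while the homogeneity of $M$ will force its Kobayashi pseudo-distance to vanish identically, and then the distance-decreasing property of holomorphic maps closes the argument.

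First, by post-composing with a suitable M\"obius transformation I may assume that the three omitted values are $0,1,\infty$, so that $f$ is a holomorphic map
\[
f\colon M\to X:=\mathbb{P}^1(\mathbb{C})\setminus\{0,1,\infty\}.
\]
The universal covering $p\colon\mathbb{D}\to X$ equips $X$ with the quotient of the Poincar\'e metric, and this metric realizes the Kobayashi pseudo-distance $d_X$ on $X$; in particular $d_X$ is a genuine distance (i.e.\ $X$ is Kobayashi hyperbolic).

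Next I would prove that the Kobayashi pseudo-distance on $M$ vanishes identically. Let $G$ be the complex Lie group acting transitively and biholomorphically on $M$, and denote its identity component by $G^0$. Via the orbit map $g\mapsto g\cdot x_0$ (for a chosen $x_0\in M$), one has $M\cong G/G_{x_0}$; the orbit map is therefore open, so each $G^0$-orbit in $M$ is open, and by connectedness of $M$ the component $G^0$ already acts transitively. We may accordingly assume $G$ is connected. For each $X\in\mathfrak{g}$, the one-parameter subgroup $z\mapsto\exp(zX)$ is holomorphic from $\mathbb{C}$ into $G$, and since the Kobayashi pseudo-distance on $\mathbb{C}$ is zero and is decreased by holomorphic maps, $d_G(e,\exp(X))=0$ for every $X\in\mathfrak{g}$. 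The image of $\exp$ contains a neighborhood of $e$, so $\{g\in G:d_G(e,g)=0\}$ is an open subgroup and hence all of $G$, yielding $d_G\equiv 0$. Applying the distance-decreasing property to the surjective holomorphic orbit map $G\to M$ then gives $d_M\equiv 0$.

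Finally, applying the distance-decreasing property once more to $f\colon M\to X$ produces
\[
d_X\bigl(f(x),f(y)\bigr)\le d_M(x,y)=0 \quad \text{for all } x,y\in M,
\]
and since $d_X$ is a genuine distance this forces $f(x)=f(y)$; i.e.\ $f$ is constant. The argument is essentially formal once the Kobayashi machinery is in place, and the only delicate point is the reduction to connected $G$ described above. It is worth stressing that neither the Ricci non-negativity nor the K\"ahler structure of $M$ enters this proof; those hypotheses become relevant only when the homogeneity assumption is dropped, which is precisely the objective of the remainder of the paper.
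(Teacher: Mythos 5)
The paper states Theorem A only as a cited classical result of Kobayashi (1970) and supplies no proof of its own, and your argument is exactly the standard one from Kobayashi's book --- vanishing of the Kobayashi pseudo-distance on a manifold homogeneous under a complex Lie group (via one-parameter subgroups $\mathbb{C}\to G$ and the open-subgroup argument), combined with the hyperbolicity of $\mathbb{P}^1(\mathbb{C})\setminus\{0,1,\infty\}$ and the distance-decreasing property --- so it is correct and matches the intended source. The only point worth adding is the justification that a meromorphic function omitting three values has empty indeterminacy locus (writing $f=g/h$ locally, at an indeterminacy point $g-ah$ vanishes for every $a$, so every value would be attained nearby), which is what lets you treat $f$ as a genuine holomorphic map into $\mathbb{P}^1(\mathbb{C})\setminus\{0,1,\infty\}$.
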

Kobayashi's result is subject   to certain  complex manifolds which  are  acted on 
transitively by a complex Lie group.  In 1975,  Goldberg-Ishihara-Petridis \cite{Gold}   
treated     a class of  locally flat   manifolds (see N. Petridis \cite{Pet} also). 
They  showed  that 
\begin{thb}[Goldberg-Ishihara-Petridis, 1975] If $M$ is locally flat, then  every  holomorphic mapping  $f: M\to \mathbb P^1(\mathbb C)\setminus\{0, 1, \infty\}$ of bounded dilatation must be  a constant.
\end{thb}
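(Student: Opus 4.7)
\medskip

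\noindent\emph{Proof plan.}\
Equip the target $N := \mathbb{P}^1(\mathbb{C})\setminus\{0,1,\infty\}$ with the complete hyperbolic metric $\omega_N$ of constant negative Gaussian curvature $-K$, $K>0$, pushed down from the Poincar\'e metric on $\mathbb{D}$ via the universal covering $p\colon\mathbb{D}\to N$. The strategy is an Ahlfors--Schwarz / Chern--Lu comparison against the flat K\"ahler metric $g$ on $M$, concluded by a Yau-type Liouville argument. To this end set $h := |df|^2_{g,\omega_N}$, the squared norm of the differential of $f$ taken with respect to $g$ on the source and $\omega_N$ on the target. The hypothesis of bounded dilatation, combined with local compactness of $\mathbb{P}^1(\mathbb{C})$, translates into a uniform pointwise bound $0\leq h\leq H<\infty$ on $M$.

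On the open set $U := \{h>0\}$, the standard Chern--Lu inequality applied with target curvature $-K<0$ and the vanishing curvature tensor of $(M,g)$ yields
\begin{equation*}
\Delta_g \log h \;\geq\; K\, h \qquad \text{on } U.
\end{equation*}
Local flatness of $(M,g)$ is used essentially here: it annihilates the Ricci-curvature term in the Bochner--Weitzenb\"ock identity and reduces $\Delta_g$ to the ordinary Laplacian in flat holomorphic charts. Thus $\log h$ is a bounded-above subharmonic function on $U$ satisfying a strict differential inequality.

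For the Liouville step, regularize by $v_\epsilon := \log(h+\epsilon)$ for $\epsilon>0$; each $v_\epsilon$ is smooth on $M$ and bounded above. Yau's gradient estimate on a complete manifold with $\mathrm{Ric}\geq 0$, applied along an exhaustion by geodesic balls, forces $|\nabla v_\epsilon|\to 0$ and, in the limit $\epsilon\downarrow 0$, makes $\log h$ constant on $U$. Combined with $\Delta_g\log h\geq Kh$, this forces $h\equiv 0$ on $U$, so that $U=\emptyset$. Hence $df\equiv 0$ everywhere, and by connectedness of $M$ the map $f$ is constant. The principal obstacle is the Liouville step itself: Yau's theorem in its cleanest form concerns harmonic functions, whereas here we must handle a bounded-above subharmonic function with singularities along the real-codimension-two set $\{df=0\}$. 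The $\epsilon$-regularization is the standard remedy, but one must verify both that the gradient estimate passes to the limit $\epsilon\downarrow 0$ and that $\{df=0\}$ contributes nothing to the distributional Chern--Lu inequality, verifications that lean essentially on the flatness of $M$.
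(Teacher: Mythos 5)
The paper does not prove Theorem B; it quotes it from \cite{Gold} as background, so there is no internal proof to compare against. Judged on its own terms, your plan is in the right family of ideas (a Chern--Lu/Ahlfors--Schwarz comparison against the hyperbolic metric on $\mathbb{P}^1(\mathbb{C})\setminus\{0,1,\infty\}$, concluded by a completeness argument), but both of its two load-bearing steps are incorrect as stated. First, bounded dilatation does \emph{not} give a uniform bound $h\le H$. It is a pointwise condition on the \emph{ratio} of the singular values $\lambda_1\ge\lambda_2\ge\cdots$ of $df$ (namely $\lambda_1\le K\lambda_2$) and says nothing about the size of $\lambda_1$; compactness of $\mathbb{P}^1(\mathbb{C})$ is irrelevant, since the norm is taken in the hyperbolic metric of the noncompact punctured sphere (which blows up near the punctures relative to Fubini--Study), and in any case compactness of a target never bounds a derivative. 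In Goldberg--Ishihara--Petridis the dilatation hypothesis enters elsewhere: in the Bochner formula for the energy density of a harmonic map, the negative target-curvature term produces $\sum_{i<j}\lambda_i^2\lambda_j^2$, and the bound $\lambda_1\le K\lambda_2$ is what converts this into $\ge c\,e(f)^2$; for a holomorphic map into a Riemann surface that step is automatic.

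Second, the Liouville step fails. A bounded-above subharmonic function on a complete flat manifold need not be constant (on $\mathbb{C}^m$, $m\ge2$, take $-\|z\|^{2-2m}$), and Yau's gradient estimate concerns positive \emph{harmonic} functions, so it does not apply to $v_\epsilon=\log(h+\epsilon)$; no amount of $\epsilon$-regularization repairs this. What actually closes the argument is the quadratic differential inequality $\Delta h\ge K h^2$ on $\{h>0\}$ (obtained from $\Delta\log h\ge Kh$ via $\Delta h= h\,\Delta\log h+\|\nabla h\|^2/h$), to which the Omori--Yau almost-maximum principle, or Yau's Schwarz-lemma cutoff argument on exhausting geodesic balls, applies on any complete manifold with Ricci curvature bounded below: it yields $\sup h\le 0$ with no a priori bound on $h$ and no parabolicity assumption. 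The self-improving character of the quadratic nonlinearity is exactly what makes the a priori bound of your first step unnecessary; as written, however, your proof relies on an unavailable bound and an unavailable Liouville property, so neither half goes through.
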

 
 It is known that   Nevanlinna theory  \cite{Nev} is a great 
 development   of    
  Picard's  
theorem, which    investigates     the  value distribution
  of  meromorphic mappings between   complex manifolds. 
For instance,    refer  to    L. Ahlfors \cite{ahlfors}, 
H. Cartan \cite{Cart},  Carlson-Griffths-King \cite{gri, gri1}, J. Noguchi \cite{Ng, No}, 
 E.  Nochka \cite{nochka},  M. Ru \cite{ru00, ru}, 
 B. Shiffman \cite{Shi},  B. Shabat \cite{Shabat}, F. Sakai \cite{Sa, Sa1}, 
  W. Stoll \cite{Stoll, Stoll1}, P. Vojta \cite{Voj},  H. Wu \cite{wu}, 
   and  also refer to     \cite{at0, at1, at, Dong3, Dong1, DY} and references therein.

In 2010,  A. Atsuji \cite{at1}  gave  an affirmative  answer 
 for  a class of  meromorphic functions with  slower  growth    
by introducing    a Nevanlinna-type theory based on heat diffusion, 
We   introduce  his main    work. 
Let $\alpha, \Delta$ stand for   the K\"ahler form  and   Laplace-Beltrami operator on  $M,$ respectively.  
Let $f: M\to\mathbb P^1(\mathbb C)$ be a meromorphic function.  
 We equip  $\mathbb P^1(\mathbb C)$ with  Fubini-Study metric $\omega_{FS}.$ 
The K\"ahlerness of $M$ indicates    that 
 the Hilbert-Schmidt norm    of    differential $df$ 
  with respect  to metrics  $\alpha, \omega_{FS}$ can be  expressed    as    
  $$\|df\|^2=4m\frac{f^*\omega_{FS}\wedge\alpha^{m-1}}{\alpha^m}=\Delta\log\|f\|^2.$$
\ \ \ \    He   showed that 
\begin{thc}[Atsuji, 2010] Let $f: M\to\mathbb P^1(\mathbb C)$ be a meromorphic function.   Assume that $f$ satisfies  the  growth condition 
$$
\int_1^\infty e^{-\epsilon t^2}dt\int_{B(t)}\|df\|^2dv<\infty
$$
for any $\epsilon>0,$ where $B(t)$ is the geodesic ball    centered at a fixed reference point $o$ with radius $t$ in $M.$ 
Then, $f$  must be a constant if it omits $3$ distinct values.
\end{thc}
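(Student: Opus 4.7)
The plan is to extend classical Nevanlinna theory to this Riemannian setting by means of Brownian motion on $M$, establish a Second Main Theorem (SMT), and then specialize to three omitted values. The non-parabolicity of $M$ and the hypothesis $\mathrm{Ric}\geq 0$ provide transient Brownian motion together with Li--Yau Gaussian heat kernel bounds of the form
$$p_t(o,x) \;\lesssim\; \frac{1}{V(\sqrt{t})}\exp\!\left(-c\,\frac{d(o,x)^2}{t}\right),$$
which will convert integrals of $\|df\|^2$ over geodesic balls into controlled averages along diffusion paths.

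First I would introduce probabilistic Nevanlinna functions. Let $(X_s)_{s\geq 0}$ denote Brownian motion on $M$ started at a fixed point $o$. Define the characteristic function
$$T(r,f) \;:=\; \frac{1}{2}\mathbb{E}_o\bigl[\log(1+|f(X_r)|^2)\bigr] - \frac{1}{2}\log(1+|f(o)|^2),$$
and for each $a\in\mathbb{P}^1(\mathbb{C})$ the corresponding proximity function $m(r,a)$ and counting function $N(r,a)$ via analogous Brownian expectations and occupation times on $f^{-1}(a)$. Applying the It\^o formula to $\log(1+|f|^2)$ and to $\log\|f-a\|_{FS}^{-2}$, together with the K\"ahler identity $\|df\|^2=\Delta\log(1+|f|^2)$ recalled in the statement of Theorem C, yields simultaneously the representation
$$T(r,f) \;=\; \frac{1}{4}\int_0^r ds\int_M p_s(o,x)\,\|df\|^2(x)\,dv(x)$$
and the First Main Theorem $T(r,f)=m(r,a)+N(r,a)+O(1)$.

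The central step is the SMT: for distinct $a_1,\dots,a_q\in\mathbb{P}^1(\mathbb{C})$,
$$(q-2)\,T(r,f)\;\leq\;\sum_{j=1}^q N(r,a_j)+S(r,f),$$
where the error $S(r,f)$ comes from a diffusion-theoretic logarithmic derivative lemma. One pulls back a singular metric on $\mathbb{P}^1(\mathbb{C})$ with appropriate logarithmic singularities at $a_1,\dots,a_q$, applies the It\^o/Green identity to the pulled-back volume form, and dominates the resulting remainder by a Brownian expectation involving $\|df\|^2$; a Borel-type calculus lemma then yields $S(r,f)=o(T(r,f))$ outside an exceptional set, provided that $T(r,f)$ does not grow too rapidly.

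The principal obstacle is precisely this last implication: one must show that the hypothesis
$$\int_1^\infty e^{-\epsilon t^2}\,dt\int_{B(t)}\|df\|^2\,dv<\infty$$
does govern the growth of $T(r,f)$ and thereby kills the error term. The Li--Yau heat kernel bound is what effects this reduction. Splitting the representation of $T(r,f)$ into dyadic annuli $B(2^{k+1})\setminus B(2^k)$ and inserting the Gaussian estimate shows that $T(r,f)$ is majorized, up to constants, by precisely the weighted integral appearing in the hypothesis; the exponential factor $e^{-\epsilon t^2}$ is exactly the Gaussian weight needed at radius $t$. With $T(r,f)$ controlled in this way, the Borel exceptional-set argument closes, and $S(r,f)$ is truly a small term. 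Finally, if $f$ omits $a_1,a_2,a_3$ then $N(r,a_j)\equiv 0$ for $j=1,2,3$, and the SMT forces $T(r,f)=O(1)$; hence $f$ must be constant.
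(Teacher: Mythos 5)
The paper does not actually prove Theorem C: it is quoted from Atsuji \cite{at1}, and the introduction only sketches his method (Brownian-motion Nevanlinna functions $\tilde T_f(t,\omega_{FS})$, $\tilde m_f(t,a)$, $\tilde N_f(t,a)$, It\^o's formula, Li--Yau heat kernel bounds). Your outline follows exactly that route, and reading the growth hypothesis as the Gaussian weight needed to pair $\int_{B(t)}\|df\|^2dv$ against the Li--Yau upper bound for $p_s(o,x)$, so as to make $\tilde T_f(t,\omega_{FS})$ finite for each $t$, is the correct mechanism.

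There is, however, a genuine gap at the final step. From $(q-2)T\leq\sum_j N(r,a_j)+S(r,f)$ with $q=3$ and $N\equiv 0$ you deduce $T(r,f)=O(1)$ and immediately conclude that $f$ is constant. That inference requires $T(t,f)\to\infty$ for every nonconstant $f$, which is precisely one of the three conditions the paper flags as needing to be arranged (``$\tilde T_f(t,\omega_{FS})\to\infty$ as $t\to\infty$''), and it is not automatic here: by your own representation,
$$T(t,f)=\frac{1}{4}\int_0^t ds\int_M p_s(o,x)\,\|df\|^2\,dv\;\leq\;\frac{1}{8}\int_M G(o,x)\,\|df\|^2\,dv,$$
and on a non-parabolic (transient) $M$ --- which is allowed in Theorem C and is in fact the main setting of this paper --- the right-hand Green-energy integral can be finite for a nonconstant $f$. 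So bounded $T$ does not force $df\equiv 0$, and the finite-energy case needs a separate Liouville-type argument (this is the other half of Atsuji's paper, as its title indicates). Relatedly, your claim $S(r,f)=o(T(r,f))$ via a Borel-type lemma also presupposes $T\to\infty$, so the same dichotomy must be addressed there. A smaller inconsistency: your two displayed formulas for $T(r,f)$ (the expectation of $\tfrac12\log(1+|f|^2)$ and the energy integral) differ by the counting function of the poles of $f$, since $\Delta\log(1+|f|^2)$ carries negative delta masses on $f^{-1}(\infty)$; one of them should be taken as the definition and the other recovered through the First Main Theorem.
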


 Atsuji's method   \cite{at1} (see \cite{Dong3} also) is  to use    
  stochastic calculus by Brownian motion. 
 Atsuji  successfully  introduced   Nevanlinna-type functions $\tilde T_f(t, \omega_{FS}),$ 
 $\tilde m_f(t, a)$ and $\tilde N_f(t, a),$  where  
  $t$ is the time of  the Brownian motion $X_t$ on $M.$ 
  By  computing    curvatures and using   It\^o's formula (see  \cite{Ni}), 
    he established an  analogue   of the  Nevanlinna theory. 
       Note    that   $M$ is  stochastically complete (i.e., $X_t$ is conservative)  due to     
   Grigor'yan's criterion (see \cite{Gy}). 
In order  to  make $\tilde T_f(t, \omega_{FS})$ and $\tilde N_f(t, a)$   meaningful, 
the  following   are necessary: 
 \begin{enumerate}
  \item[$\bullet$]    $\tilde T_f(t, \omega_{FS})<\infty$ for $t>0$;
  
     \item[$\bullet$]   $\tilde T_f(t, \omega_{FS})\to\infty$ as $t\to\infty$;    

    \item[$\bullet$] $\tilde N_f(t,a)=0$  if  $f$ omits $a.$
\end{enumerate}
 Hence, $f$ needs  to  satisfy  certain  growth assumption. 
That is why a growth condition  is added  to  Theorem C.

  In this  paper,  our   purpose is  to  study  the Picard's problem  using  the tool of Nevanlinna theory, i.e., 
  we   shall  
     extend     the classical     Nevanlinna theory   to   non-parabolic  complete K\"ahler manifolds with non-negative Ricci curvature, and apply  the Second Main Theorem to 
   establish  a   Picard's theorem. 
     Let us 
   first  recall the  basic  notions   of  the non-parabolicity and the Ricci curvature  of    Riemannian manifolds.  
     Let $M$ be a complete  Riemannian manifold.  One says that  $M$ is  non-parabolic, if  there exists  a positive global Green function for $M,$ and parabolic otherwise. 
Or equivalently,  in a viewpoint of geometric analysis,  $M$ is  called  non-parabolic if 
$$G(x,y)=2\int_0^\infty p(t,x, y)dt<\infty$$
for $x\not=y,$ and parabolic if this infinite integral is divergent, where $p(t,x,y)$ denotes    the transition density function of the  Brownian motion $X_t$ generated by the 
 Laplace-Beltrami operator $\Delta$ on $M,$ 
and which  is  also called the heat kernel of $M.$
Note that  when $M$ is non-parabolic,  $G(x,y)$  defines  the  unique  minimal positive  global Green function of $\Delta/2$ for $M.$  More details may refer to   \cite{Li-Tam, L-T-W, Va0, Va}.  
Let $R$ stand for   the Riemannian curvature tensor of $M.$ 
The Ricci curvature tensor of $M$ is defined by  
 $${\rm{Ric}}(X, Y)=\sum_{j=1}^{\dim M} R(X, e_j, e_j, Y)$$
for any $X, Y\in T_{x}M$ with $x\in M,$ where $\{e_1,\cdots, e_{\dim M}\}$ is an orthonormal basis of $T_{x}M.$
 The Ricci curvature  of $M$ at $x$ in a direction $X$ is  defined  by
 $${\rm{Ric}}(X)=\frac{{\rm{Ric}}(X, X)}{\|X\|^2}.$$ We say that $M$ is of   non-negative Ricci curvature at $x,$ if ${\rm{Ric}}(X)\geq0$ for each nonzero vector  $X\in T_{x}M;$ and that  $M$ is of   non-negative Ricci curvature, if 
  it is of  non-negative Ricci curvature at each  $x\in M.$ 
 Moreover, the sectional curvature of $M$ at $x$ along a section $\Pi$ spanned by $X, Y\in T_{x}M$ is defend by 
 $$K(\Pi)=\frac{R(X, Y, Y, X)}{\|X\wedge Y\|^2}.$$
From the definitions,  the sign of  the sectional curvature at a point determines the sign of the Ricci curvature at the same point.
 
      To facilitate the construction of  examples later for  manifolds considered in this paper, 
we shall provide some  criteria  of  non-parabolicity via the  volume growth. 
A  sharp necessary condition by N. Varopoulos \cite{Va1} states that if $M$ is non-parabolic, then 
\begin{equation}\label{cri}
\int_1^\infty\frac{tdt}{V(t)}<\infty,
\end{equation}
where $V(t)$ denotes    the Riemannian volume of a geodesic ball   centered at a  fixed  reference point  $o$ with radius $t$ in $M.$
However,   (\ref{cri}) is far from  sufficient,   and a counterexample was constructed  in \cite{Va1}.
The first major result for the sufficiency was due to N. Varopoulos \cite{Va} and Li-Yau \cite{Li-Yau}.
Based on  the   heat kernel estimates, they proved that if $M$ has non-negative Ricci curvature and (\ref{cri}) is satisfied, then $M$ is  non-parabolic.
So, for  $M$ with non-negative Ricci curvature, $M$ is non-parabolic if and only if (\ref{cri}) is satisfied.

Next,   assume that   $(M, g)$ is  a  non-parabolic complete noncompact  K\"ahler  manifold with non-negative Ricci curvature, of complex dimension $m.$
  Here, the  non-parabolicity of $M$ is meant  that  $M$ is non-parabolic as a Riemannian manifold.  
 The Chern-Ricci form $\mathscr R$ of $g$ on $M$ is defined  by 
  $$\mathscr R=-dd^c\log\det(g_{i\bar j})$$ with $dd^c=\sqrt{-1}\partial\overline{\partial}/(2\pi).$ Since  $M$ is K\"ahler,   ${\rm{Ric}}\geq0$ is equivalent to     $\mathscr R\geq0.$
   Our innovative   idea   in the present   paper     is to construct a family  $\{\Delta(r)\}_{r>0}$ of  exhaustive precompact    domains   for 
 $M$ by means of  the   minimal positive  global Green function  for $M,$   so   that 
  it  allows   us to  well define   Nevanlinna's functions on  $\Delta(r)$ 
     and then  establish   
   a Second Main Theorem by estimating  the gradient  of   Green function for $\Delta(r).$ 
     Applying       the Second Main  Theorem, 
  we     provide   a positive answer to the Picard's problem under certain    condition.

\vskip\baselineskip

 \subsection{Main results}~

Let  $G(o,x)$ be   the minimal positive  global Green function  of $\Delta/2$ for $M,$  
 in which $\Delta$ is the Laplace-Beltrami operator on $M,$
 and  $o$ is a fixed reference point in $M.$ 
  Using   Li-Yau's estimate  \cite{Li-Yau},  there are constants $A, B>0$ such that  
 $$A\int_{\rho(x)}^\infty\frac{tdt}{V(t)}\leq G(o,x)\leq B\int_{\rho(x)}^\infty\frac{tdt}{V(t)}$$
holds for all $x\in M$ and all $t>0,$   in which  $\rho(x)$ is  the Riemannian distance between   $x$ and $o,$ and  $V(t)$ stands for   
  the Riemannian volume of the geodesic ball   centered at $o$ with radius $t$ in $M.$  For $r>0,$  define    
  $$\Delta(r)=\left\{x\in M: \    G(o,x)>A\int_r^\infty\frac{tdt}{V(t)}\right\}.$$
  Then, we have $o\in\Delta(r)$ for all $r>0$ and  the family $\{\Delta(r)\}_{r>0}$ exhausts $M.$
  
  Let  $X$ be a complex projective manifold with $\dim_{\mathbb C} X\leq m,$ over which one  can put a Hermitian positive line bundle $(L, h)$ with  Chern form $c_1(L, h)>0.$
 We now take   a  divisor  $D\in|L|,$ where $|L|$ denotes  the complete linear system of $L.$ 
  Given a  meromorphic mapping $f: M\to X,$ we  have the Nevanlinna's functions $T_f(r, L), m_f(r,D), N_f(r,D), \overline{N}_f(r,D)$ on $\Delta(r)$ 
  (see   Section \ref{sec32}).  Let    $K_X$ be the canonical line bundle over $X.$ 
The characteristic function of $\mathscr R$ is defined by
$$T(r,\mathscr R)= \frac{\pi^m}{(m-1)!}\int_{\Delta(r)}g_r(o,x)\mathscr R\wedge\alpha^{m-1},$$
where $\alpha$ is the K\"ahler form of $M,$ and  $g_r(o,x)$ is   the Green function of $\Delta/2$ for $\Delta(r)$ with a pole at $o$ satisfying Dirichlet boundary condition.   Set
\begin{equation}\label{Hr}
H(r,\delta)=\frac{1}{r}\left(\frac{V(r)}{r}\right)^{1+\delta}\int_{r}^\infty\frac{tdt}{V(t)}. 
 \end{equation} 

 The   main result  is the following  Second Main Theorem. 

     \begin{theorem}[=Theorem \ref{main}]\label{main11}  
 Let $M$ be a non-parabolic  complete  non-compact  K\"ahler manifold with non-negative Ricci curvature. 
 Let $X$ be a complex projective manifold of complex dimension not greater than that  of $M.$
 Let $D\in|L|$ be a reduced divisor of simple normal crossing type,  where $L$ is a positive line bundle over $X.$ Let $f:M\rightarrow X$ be a differentiably non-degenerate meromorphic mapping.  Then  for any $\delta>0,$ there exists a subset $E_\delta\subseteq(0, \infty)$ of finite Lebesgue measure such that 
$$T_f(r,L)+T_f(r, K_X)+T(r, \mathscr R)\leq \overline N_f(r,D)+O\left(\log^+T_f(r,L)+\log H(r,\delta)\right)$$
holds for all $r>0$ outside $E_\delta,$ where $H(r,\delta)$ is given by $(\ref{Hr}).$ 
\end{theorem}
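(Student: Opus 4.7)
My plan is to adapt the Carlson-Griffiths-King singular volume form technique to the Green-function exhaustion $\{\Delta(r)\}_{r>0}$ introduced in the paper. Three ingredients carry the argument: (i) construct a singular volume form on $X$ whose $dd^c\log$ encodes $c_1(L)+c_1(K_X)$ away from $D$ and absorbs $D$ with the correct truncation; (ii) integrate its pull-back to $M$ against the Dirichlet Green function $g_r(o,\cdot)$ on $\Delta(r)$ via a Green-Jensen identity; and (iii) apply a Borel-type calculus lemma on the parameter $r$ to extract the error $\log^+T_f(r,L)+\log H(r,\delta)$.

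Concretely, fix a smooth volume form $\Omega$ on $X$ and write the SNC divisor $D=\sum_jD_j$ with $D_j=(s_j)$ for sections $s_j$ of Hermitian line bundles $(L_j,h_j)$ satisfying $L\simeq\bigotimes_j L_j$. Set
$$\Psi=\frac{\Omega}{\prod_j\|s_j\|^2(\log\|s_j\|^2)^2}.$$
Standard Poincaré-type computations give $\int_X\Psi<\infty$, and on $X\setminus D$
$$dd^c\log\Psi\geq c_1(L,h)+c_1(K_X,\Omega^{-1})+\text{(smooth bounded term)}.$$
Pulling back by $f$ and setting $\xi=f^*\Psi/\alpha^m$, the K\"ahler identity on $M$ yields, modulo the jump currents along $f^{-1}(D)$,
$$dd^c\log\xi=f^*(dd^c\log\Psi)+\mathscr R.$$
Wedging with $\alpha^{m-1}/(m-1)!$ and applying the Green-Jensen formula
$$\int_{\Delta(r)}g_r(o,x)\,\tfrac12\Delta u\,dv=\tfrac12 u(o)-\tfrac12\int_{\partial\Delta(r)}u\,\partial_\nu g_r\,dA$$
with $u=\log\xi$, the interior contribution reproduces, by the very definition of the characteristic functions, the quantity
$$T_f(r,L)+T_f(r,K_X)+T(r,\mathscr R)-\overline N_f(r,D),$$
the double-log denominator in $\Psi$ being precisely what performs the multiplicity truncation. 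Differentiable non-degeneracy of $f$ ensures $\xi\not\equiv 0$, so $\log\xi\in L^1_{\mathrm{loc}}(M)$.

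The main obstacle is controlling the boundary contribution $\int_{\partial\Delta(r)}\log\xi\,\partial_\nu g_r\,dA$. Using concavity of $\log$ and Jensen's inequality against the probability measure $\partial_\nu g_r\,dA$ on $\partial\Delta(r)$, the estimation reduces to bounding $\log\int_{\partial\Delta(r)}\xi\,\partial_\nu g_r\,dA$. A Borel-type calculus lemma in the variable $r$ then shows that, outside an exceptional set $E_\delta\subseteq(0,\infty)$ of finite Lebesgue measure, this integral is dominated by the $(1+\delta)$-th power of its antiderivative, which up to constants equals $T_f(r,L)$; the precise gauge is produced by combining the defining normalization $G(o,x)>A\int_r^\infty t\,dt/V(t)$ of $\Delta(r)$, the Li-Yau estimates and gradient bounds for $G(o,\cdot)$, and the co-area formula relating $dA$ on $\partial\Delta(r)$ to geodesic-sphere measure. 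Tracking these three ingredients to produce the exact weight $H(r,\delta)=r^{-1}(V(r)/r)^{1+\delta}\int_r^\infty t\,dt/V(t)$, rather than a crude upper bound, is the most delicate step and the principal technical obstacle of the proof.
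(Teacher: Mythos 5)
Your overall strategy --- a Carlson--Griffiths--King singular volume form pulled back to $M$, a Green--Jensen identity on the exhaustion $\Delta(r)$, and a Borel-type calculus lemma for the boundary term --- is exactly the paper's. But two points are genuinely off, and the second is a gap rather than a difference of route. First, the truncation to $\overline N_f(r,D)$ does not come from the factors $(\log\|s_j\|^2)^2$: in the classical equidimensional argument those factors serve only to make $\int_X\Psi<\infty$. The truncation comes from the derivative terms in $\xi$: near a point where $f_{\lambda j}=w_{\lambda j}\circ f$ vanishes to order $\mu$, the quotient $|\partial f_{\lambda j}/\partial z|^2/|f_{\lambda j}|^2$ has a pole of order exactly $2$, independent of $\mu$, so $dd^c[\log\xi]$ picks up $-[{\rm Red}(f^*D)]$ rather than $-[f^*D]$. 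The paper accordingly drops the double-log factors and works with $\Phi=\Omega/\prod_j\|s_j\|^2$, obtaining inequality (\ref{5q}) directly. (Also, $\xi=f^*\Psi/\alpha^m$ is identically zero when $n=\dim X<m$; you need $f^*\Psi\wedge\alpha^{m-n}=\xi\,\alpha^m$.)

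Second, and more seriously, your boundary estimate hinges on the claim that the antiderivative $\int_{\Delta(r)}g_r(o,x)\,\xi\,dv$ is ``up to constants equal to $T_f(r,L)$.'' For $n=m$ this is the Crofton/First-Main-Theorem computation $\int_{\Delta(r)}g_r\,f^*\Psi=\int_X N_f(r,y)\Psi(y)\leq\big(T_f(r,L)+O(1)\big)\int_X\Psi$, which is precisely where $\int_X\Psi<\infty$ is needed --- the true role of the double logs. But the theorem allows $n<m$, where the fibres $f^{-1}(y)$ are positive-dimensional and no such fibrewise counting-function identity is available, so this step does not go through as stated. The paper circumvents it by never integrating $\xi$ itself: it bounds $\log^+\xi$ pointwise by $O\big(\log^+\varrho+\sum_{k,\lambda}\log^+(\|\nabla f_{\lambda k}\|/|f_{\lambda k}|)+1\big)$ via a partition of unity subordinate to the SNC charts, then applies the Logarithmic Derivative Lemma (Theorem \ref{log1}) to each scalar meromorphic function $f_{\lambda k}:M\to\mathbb P^1(\mathbb C)$ --- where the Crofton argument with the singular metric on $\mathbb P^1(\mathbb C)$ does work (Lemma \ref{oo12}) --- together with the Calculus Lemma (Theorem \ref{calculus}) applied to $\varrho$. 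To close your argument you would need either to restrict to the equidimensional case or to insert this scalar decomposition.
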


For a divisor  $D\in|L|,$  the simple defect  of $f$ with respect to $D$  is defined   by
$$ \bar\delta_f(D)=1-\limsup_{r\rightarrow\infty}\frac{\overline{N}_f(r,D)}{T_f(r,L)}.$$
Set 
$$\left[\frac{c_1(K_X^*)}{c_1(L)}\right]=\inf\left\{s\in\mathbb R: \ \eta< s\omega;  \  \   ^\exists\eta\in c_1(K^*_X), \  ^\exists\omega\in c_1(L)\right\}.$$

We obtain   a  defect relation: 
\begin{cor}[=Corollary \ref{dde}]\label{app1}  Assume the same conditions as in Theorem $\ref{main11}.$ Then 
$$\bar\delta_f(D)
\leq \left[\frac{c_1(K_X^*)}{c_1(L)}\right]-\liminf_{r\rightarrow\infty}\frac{T(r,\mathscr R)}{T_f(r,L)}\leq \left[\frac{c_1(K_X^*)}{c_1(L)}\right], 
$$
if one of the following conditions is satisfied$:$

\noindent $(i)$ $M$ satisfies the volume growth condition 
\begin{equation}\label{cond}
\lim_{r\to\infty}\frac{\log\left(\frac{V(r)}{r^2}\displaystyle\int_r^\infty\frac{tdt}{V(t)}\right)}{\log r}=0;
\end{equation}
$(ii)$ $f$ is  of non-polynomial type growth, i.e., 
$f$ satisfies the growth condition 
$$\lim_{r\to\infty}\frac{\log r}{T_f(r,L)}=0.$$
\end{cor}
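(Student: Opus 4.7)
The approach is to take the Second Main Theorem (Theorem \ref{main11}), divide through by $T_f(r, L)$, and pass to $\limsup$ along $r\to\infty$ outside the exceptional set $E_\delta$ of finite Lebesgue measure. Rearranging the inequality gives
$$
\frac{\overline N_f(r,D)}{T_f(r,L)} \geq 1 + \frac{T_f(r,K_X)}{T_f(r,L)} + \frac{T(r,\mathscr R)}{T_f(r,L)} - \frac{O\bigl(\log^+ T_f(r,L)+\log H(r,\delta)\bigr)}{T_f(r,L)},
$$
and the elementary bound $\limsup(A_n+B_n)\geq \liminf A_n+\liminf B_n$ allows the three terms on the right to be estimated separately.

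The first reduction converts the canonical-bundle contribution into a numerical invariant. For any real $s$ admitting representatives $\eta\in c_1(K_X^*)$, $\omega\in c_1(L)$ with $\eta<s\omega$, the standard First Main Theorem comparison gives $T_f(r,K_X^*)\leq sT_f(r,L)+O(1)$, equivalently $T_f(r,K_X)\geq -sT_f(r,L)+O(1)$. Infimizing over admissible $s$ yields
$$
\liminf_{r\to\infty}\frac{T_f(r,K_X)}{T_f(r,L)} \geq -\left[\frac{c_1(K_X^*)}{c_1(L)}\right].
$$

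The bulk of the work is then to show the error is $o(T_f(r,L))$. The summand $\log^+ T_f(r,L)/T_f(r,L)$ is automatic once $T_f(r,L)\to\infty$ (if $T_f(r,L)$ stays bounded, the statement becomes vacuous). For $\log H(r,\delta)$, the identity
$$
H(r,\delta)=\Bigl(\frac{V(r)}{r}\Bigr)^{\delta}\cdot\frac{V(r)}{r^{2}}\int_{r}^{\infty}\frac{t\,dt}{V(t)}
$$
together with the Bishop--Gromov bound $V(r)=O(r^{2m})$ forced by $\mathrm{Ric}\geq 0$ yields $\log H(r,\delta)=(\beta-1)\delta\log r+F(r)$, where $\beta\in[1,2m]$ reflects the effective polynomial rate of $V$ and
$$F(r)=\log\!\Bigl(\tfrac{V(r)}{r^{2}}\int_{r}^{\infty}\tfrac{t\,dt}{V(t)}\Bigr).$$
Under hypothesis (ii), the assumption $\log r=o(T_f(r,L))$ immediately makes both summands negligible. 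Under hypothesis (i), (\ref{cond}) gives $F(r)=o(\log r)$, and sending $\delta\downarrow 0$ via a diagonal selection across the exceptional sets $E_\delta$ makes the $\delta\log r$ contribution an arbitrarily small fraction of $\log r$; the remaining ratio $\log r/T_f(r,L)$ is then controlled by the growth forced on $T_f(r,L)$ by the differentiable non-degeneracy of $f$ and the polynomial volume upper bound.

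Assembling the three pieces and using $\bar\delta_f(D)=1-\limsup \overline N_f(r,D)/T_f(r,L)$ produces the first inequality of the corollary; the second inequality is automatic from $T(r,\mathscr R)\geq 0$, which holds because the K\"ahlerness of $M$ together with $\mathrm{Ric}\geq 0$ forces $\mathscr R\geq 0$. The main obstacle I foresee is the coordinated control in case (i) of the diagonal limit $\delta\downarrow 0$ against the (uncontrolled) growth of $T_f(r,L)$, where careful bookkeeping of the exceptional sets and a workable lower bound for the characteristic are required; all other steps are essentially rearrangements once the error is declared negligible.
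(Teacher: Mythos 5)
Your proposal is correct and follows essentially the same route as the paper: divide the Second Main Theorem by $T_f(r,L)$, absorb the canonical-bundle term into $\left[c_1(K_X^*)/c_1(L)\right]$, and show the error $O(\log^+T_f(r,L)+\log H(r,\delta))$ is negligible by bounding $\log H(r,\delta)$ via the Bishop--Gromov estimate $V(r)\leq\omega_{2m}r^{2m}$ --- under (i) obtaining $\limsup_r \log H(r,\delta)/\log r=O(\delta)$ combined with $\log r=O(T_f(r,L))$ and $\delta\downarrow 0$, and under (ii) obtaining $H(r,\delta)=O(r^{(2m-1)(1+\delta)-1})$ combined with $\log r=o(T_f(r,L))$. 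The paper's proof is just a terser version of the same bookkeeping, so no further comparison is needed.
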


\noindent\textbf{Remark 2.}\  Note that  $r^2=o(V(r))$ as $r\to\infty,$ due to  
$$\int_1^\infty\frac{tdt}{V(t)}<\infty.$$
The  condition (\ref{cond})  is  relaxed. For example, it holds when  $V(r)=O(r^\mu)$ with $\mu>2;$ and holds when   $V(r)=O(r^\mu\log^{\nu}r)$ with $\mu>2$   or  $\mu= 2$  and $\nu>1.$ 

As an application of Corollary \ref{app1},  we consider  the Picard's problem under certain condition below. 
Treat  the case when $X=\mathbb P^1(\mathbb C)$ with the point line bundle, i.e., the hyperplane line bundle $\mathscr O(1).$ 
Take  $D=a_1+\cdots+a_q,$ where $a_1,\cdots, a_q$ are $q$ distinct points in $\mathbb P^1(\mathbb C).$  
If we put $L=q\mathscr O(1),$ then  $D\in|L|.$ 
A basic and well-known fact states that 
$$\left[\frac{c_1\big(K_{\mathbb P^1(\mathbb C)}^*\big)}{c_1(\mathscr O(1))}\right]=2,$$
which yields that 
$$\sum_{j=1}^q\bar\delta_f(a_j)\leq 2$$
whenever   $(i)$ or $(ii)$ in Corollary \ref{app1} is satisfied.  
According to  the  arguments above, we conclude that 

\begin{cor}\label{dde1}    Let $M$ be a non-parabolic complete noncompact   K\"ahler manifold with non-negative Ricci curvature.   If  $M$ satisfies the volume growth condition $(\ref{cond}),$
then every  meromorphic  function   on $M$  reduces  to a constant if it omits $3$  distinct values. 
\end{cor}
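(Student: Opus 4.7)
The plan is to derive this Picard-type statement as a direct consequence of the defect relation in Corollary \ref{app1}(i), which is itself a consequence of the Second Main Theorem (Theorem \ref{main11}). Essentially all the analytic machinery has already been established; what remains is to pick the right target, line bundle, and divisor, and then run the standard defect argument.

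Concretely, I would argue by contradiction. Let $f : M \to \mathbb{P}^1(\mathbb{C})$ be a meromorphic function omitting three distinct values $a_1, a_2, a_3$, and assume $f$ is non-constant (the constant case is trivial). Since $\dim_{\mathbb{C}}\mathbb{P}^1(\mathbb{C}) = 1$, being non-constant is the same as being differentiably non-degenerate in the sense of Theorem \ref{main11}, so the hypotheses of Corollary \ref{app1} apply. I would then set $X = \mathbb{P}^1(\mathbb{C})$, $L = 3\mathscr{O}(1)$, and $D = a_1 + a_2 + a_3 \in |L|$; this divisor is reduced and (automatically) of simple normal crossing type on a curve.

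Next, I would invoke Corollary \ref{app1}(i), using the assumption that $M$ satisfies the volume growth condition (\ref{cond}). The computation $\bigl[c_1(K^*_{\mathbb{P}^1})/c_1(\mathscr{O}(1))\bigr] = 2$ recalled in the excerpt, combined with the non-negativity of $T(r,\mathscr{R})/T_f(r,L)$, yields the defect bound
\[
\sum_{j=1}^{3}\bar\delta_f(a_j) \leq 2.
\]
On the other hand, since $f$ omits each $a_j$, the counting functions $\overline{N}_f(r, a_j)$ vanish identically; combined with $T_f(r,\mathscr{O}(1)) \to \infty$ (which follows from $f$ being non-constant and differentiably non-degenerate so that the characteristic function is unbounded), this gives $\bar\delta_f(a_j) = 1$ for $j = 1, 2, 3$, whence $\sum_{j=1}^3 \bar\delta_f(a_j) = 3$. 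This contradicts the defect bound, forcing $f$ to be constant.

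I do not anticipate a serious technical obstacle at this stage: the heavy lifting is concentrated in the Second Main Theorem and in verifying that the error term $\log H(r,\delta)$ is absorbed by $T_f(r,L)$ under the volume growth hypothesis, both of which are assumed established. The only minor care point is ensuring that $T_f(r,\mathscr{O}(1))$ genuinely tends to infinity so that the defects $\bar\delta_f(a_j)$ are meaningful; this is a standard fact for non-constant meromorphic functions on a non-compact K\"ahler manifold and can be checked directly from the definition of $T_f(r,L)$ via the family $\{\Delta(r)\}_{r>0}$.
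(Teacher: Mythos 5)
Your proposal is correct and follows essentially the same route as the paper: the paper also deduces Corollary \ref{dde1} from Corollary \ref{app1}(i) by taking $X=\mathbb P^1(\mathbb C)$, $L=q\mathscr O(1)$ with $q=3$, $D=a_1+a_2+a_3$, using $\bigl[c_1(K^*_{\mathbb P^1(\mathbb C)})/c_1(\mathscr O(1))\bigr]=2$ to get $\sum_{j=1}^{3}\bar\delta_f(a_j)\leq 2$, and noting that omitted values have defect $1$. Your explicit remarks on differentiable non-degeneracy being equivalent to non-constancy when $n=1$ and on $T_f(r,\mathscr O(1))\to\infty$ (the paper's ``growth $O(\log r)$ at least'') only make explicit what the paper leaves implicit.
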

 
\begin{cor}\label{dde2}    Let $M$ be a non-parabolic complete noncompact   K\"ahler manifold with non-negative Ricci curvature.  Then,  every  meromorphic  function   of  
non-polynomial type growth  on $M$  can omit $2$  distinct values at most. 
\end{cor}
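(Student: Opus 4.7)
The plan is to deduce Corollary \ref{dde2} as a direct consequence of Corollary \ref{app1}(ii) via a proof by contradiction. Suppose that $f: M \to \mathbb{P}^1(\mathbb{C})$ is a meromorphic function of non-polynomial type growth that omits three distinct values $a_1, a_2, a_3$. First I would verify that $f$ fits the framework of Theorem \ref{main11}: take $X = \mathbb{P}^1(\mathbb{C})$, so that $\dim_{\mathbb{C}} X = 1 \leq m$; equip it with $L = 3\mathscr{O}(1)$, which is a positive line bundle; and set $D = a_1 + a_2 + a_3 \in |L|$. Then $D$ is a reduced divisor of simple normal crossing type, being three distinct reduced points on a smooth curve. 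The non-polynomial growth assumption $\log r = o(T_f(r,L))$ forces $T_f(r,L) \to \infty$, so $f$ is non-constant; since the target has complex dimension one, non-constancy is equivalent to differentiable non-degeneracy.

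Next, the hypothesis that $f$ omits each $a_j$ gives $\overline{N}_f(r, a_j) \equiv 0$, and hence $\bar\delta_f(a_j) = 1$ for $j = 1, 2, 3$, so that $\sum_{j=1}^{3} \bar\delta_f(a_j) = 3$.

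On the other hand, applying Corollary \ref{app1}(ii) and invoking the passage from the aggregate defect $\bar\delta_f(D)$ to the individual defects $\bar\delta_f(a_j)$ (as sketched in the paragraph immediately preceding Corollary \ref{dde1}), one obtains $\sum_{j=1}^{3}\bar\delta_f(a_j) \leq 2$. Concretely this uses the identities $T_f(r, L) = 3\, T_f(r, \mathscr{O}(1))$ and $\overline{N}_f(r, D) = \sum_{j=1}^{3} \overline{N}_f(r, a_j)$, the elementary bound $\limsup_r \sum_j x_j(r) \leq \sum_j \limsup_r x_j(r)$, and the evaluation $[c_1(K_X^*)/c_1(\mathscr{O}(1))] = 2$. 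Combining $\sum_j \bar\delta_f(a_j) = 3$ with $\sum_j \bar\delta_f(a_j) \leq 2$ yields $3 \leq 2$, the desired contradiction.

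The substantive analytic work has already been absorbed into the Second Main Theorem (Theorem \ref{main11}) and its defect-relation consequence (Corollary \ref{app1}), so no new estimate is required here. The only items demanding care are the choice $L = 3\mathscr{O}(1)$, made so that $D$ actually sits in $|L|$ and the ratio $c_1(K_X^*)/c_1(L)$ reproduces the canonical number $2$, together with the remark that non-polynomial growth automatically implies differentiable non-degeneracy. Consequently the main ``obstacle'' is really no more than careful bookkeeping of the Nevanlinna-theoretic quantities; all the depth lies upstream, in the construction of the exhaustion $\{\Delta(r)\}$ through the minimal positive Green function and the resulting Second Main Theorem.
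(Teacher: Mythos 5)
Your proposal is correct and follows essentially the same route as the paper: specialize Theorem \ref{main11}/Corollary \ref{app1}(ii) to $X=\mathbb P^1(\mathbb C)$ with $D=a_1+a_2+a_3\in|3\mathscr O(1)|$, use $\left[c_1(K_{\mathbb P^1(\mathbb C)}^*)/c_1(\mathscr O(1))\right]=2$ to get $\sum_{j=1}^3\bar\delta_f(a_j)\le 2$, and contradict the value $3$ coming from $\overline N_f(r,a_j)\equiv 0$. Your added checks (non-constancy from the growth hypothesis, and the equivalence of non-constancy with differentiable non-degeneracy when the target is a curve) are correct and only make explicit what the paper leaves implicit.
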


To close the section, we finally  provide some typical examples for  $M$  where   Corollaries \ref{dde1} and \ref{dde2}  apply. 
According to (\ref{cri}), we first notice   that $M=\mathbb C^m$ with $m\geq2$ is a  non-parabolic complete noncompact   K\"ahler manifold
 with zero  Ricci curvature. Below, we  give two nontrivial examples. 

\noindent\textbf{Example 1.} For an integer $l\geq1,$ we equip  $\mathbb P^l(\mathbb C)$  with Fubini-Study metric $\omega_{FS}.$ Note that $\mathbb P^l(\mathbb C)$ is a complete K\"ahler manifold with positive  sectional curvature (and thus with 
positive Ricci curvature).
We consider the product manifold $M=\mathbb C^k\times\mathbb P^l(\mathbb C)$ with $k\geq2,$ which is equipped with the  product metric induced from $\omega_{FS}$ and  the standard  Euclidean metric on  $\mathbb C^k.$  Because  $\mathbb C^k$ is a complete noncompact K\"ahler manifold, so is $M.$
Since  $\mathbb C^k$ is flat, $M$ is clearly  Ricci non-negatively curved  under this product metric.  Again,  by the dimension condition that  $k\geq2,$
we deduce that $M$ is of  the volume growth:  $O(r^{2k})\geq O(r^{4})$ as $r\to\infty,$ 
which leads to that $(\ref{cri})$ is satisfied. Therefore, $M$ is  a non-parabolic complete noncompact 
  K\"ahler manifold with non-negative Ricci curvature. It is not difficult   to see that there exist  
   a lot of     nonconstant meromorphic functions on $M.$ For instance, we can take $f(z, \zeta)=g(z)+h(\zeta)$ or $f(z, \zeta)=g(z)h(\zeta),$  where   
$g(z)$ is any  meromorphic function on $\mathbb C^k$  and $h(\zeta)$ is any  rational function on $\mathbb P^l(\mathbb C).$

\noindent\textbf{Example 2.}  Treat  a product manifold $M=\mathbb C^k\times T^{l}$ with $k\geq2$ and $l\geq1,$ where  $T^{l}=\mathbb C^{l}/\Lambda$ denotes  the $l$-dimensional complex torus  equipped with the metric inherited from the standard  Euclidean metric on  $\mathbb C^l.$  
Note  that $M$ is a complete noncompact  K\"ahler manifold with zero Ricci curvature under the induced product metric.
 By $k\geq2,$ the similar argument as in Example 1 
also shows the non-parabolicity of  $M.$ The existence of nonconstant meromorphic functions on $M$ can  be  confirmed easily.  
For instance, for  $l=1,$     we can take $f(z, w)=g(z)+h(w)$ or $f(z, w)=g(z)h(w),$  
 where   
$g(z)$ is any  meromorphic function on $\mathbb C^k$ and $h(w)$ is any elliptic function with  a periodic  lattice $\Lambda$
on  $\mathbb C.$

More general, $M$ can be taken as the product of $M_1$ and $M_2,$ in which  $M_1$ is any complete compact K\"ahler manifold with non-negative Ricci curvature, and 
$M_2$ is a complete noncompact K\"ahler manifold with non-negative Ricci curvature and volume growth satisfying   (\ref{cri}). In particular,  $M=M_2$ satisfies the desired  conditions. 

  \section{Some Facts from  Geometric Analysis}

\subsection{Volume Comparison Theorem}~

A   space form  is  a  complete (simply-connected) Riemannian manifold with constant sectional curvature.
Let  $M^K$ denote   the $n$-dimensional   space form   with constant sectional curvature $K,$ and   let 
      $V(K, r)$ denote   the Riemannian volume of a geodesic ball with radius $r$ in $M^K.$
     Let  $(M, g)$ denote   a   
 complete Riemannian manifold of dimension $n$ with Ricci curvature  tensor ${\rm{Ric}}_M,$ 
and   
     let  
  $V(r)$  denote    the Riemannian 
  volume of  a geodesic ball centered at a fixed reference point $o$   with radius $r$ in $M.$

The well-known  volume comparison theorem by  Bishop-Gromov (see   \cite{B}) states  that  

\begin{theorem} If  ${\rm{Ric}}_M\geq(n-1)Kg,$  then the volume ratio $V(r)/V(K, r)$ is a non-increasing function  in $r>0,$ and which  tends to $1$ as $r\to0.$  Hence, 
we have 
$$V(r)\leq V(K, r)$$
holds for  all $r>0.$
\end{theorem}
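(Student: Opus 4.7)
The plan is to work in geodesic polar coordinates centred at $o$ and compare the volume density of $M$ with that of the model space $M^K$ radially. For $\theta \in S^{n-1} \subset T_oM$ and $r$ less than the cut value $c(\theta)$, write the volume element along the geodesic ray $\gamma_\theta(r) = \exp_o(r\theta)$ as $dv = \mathcal{A}(r,\theta)\, dr\, d\theta$, where $\mathcal{A}(r,\theta)$ is the Jacobian of $\exp_o$ restricted to the sphere of radius $r$. On $M^K$ the corresponding density depends only on $r$; call it $\mathcal{A}_K(r) = \mathrm{sn}_K(r)^{n-1}$, where $\mathrm{sn}_K$ solves $\mathrm{sn}_K'' + K\,\mathrm{sn}_K = 0$ with $\mathrm{sn}_K(0)=0$ and $\mathrm{sn}_K'(0)=1$. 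The key claim is the \emph{pointwise radial monotonicity}
\[
\frac{\partial}{\partial r}\log\frac{\mathcal{A}(r,\theta)}{\mathcal{A}_K(r)} \le 0 \qquad\text{for } 0 < r < c(\theta).
\]

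The heart of the matter is a Riccati inequality for $\Delta r$, the Laplacian of the distance function from $o$, which equals $\partial_r \log \mathcal{A}(r,\theta)$ along the ray. Applying the Bochner formula to $r$ on the smooth locus, where $|\nabla r| \equiv 1$, yields
\[
0 = \tfrac{1}{2}\Delta|\nabla r|^2 = |\nabla^2 r|^2 + \langle \nabla r,\nabla(\Delta r)\rangle + \mathrm{Ric}(\nabla r,\nabla r).
\]
Combining this with the Cauchy--Schwarz bound $|\nabla^2 r|^2 \ge (\Delta r)^2/(n-1)$, which uses $\nabla^2 r(\nabla r,\cdot)=0$, gives
\[
\partial_r(\Delta r) + \frac{(\Delta r)^2}{n-1} + \mathrm{Ric}(\nabla r,\nabla r) \le 0.
\]
The analogous identity holds with equality on $M^K$ and $\mathrm{Ric}$ replaced by $(n-1)K$. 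Under the hypothesis $\mathrm{Ric}_M \ge (n-1)Kg$, a standard Sturm--Riccati comparison with the matching asymptotic $\Delta r \sim (n-1)/r$ as $r \to 0^+$ produces
\[
\Delta r \;\le\; (n-1)\,\frac{\mathrm{sn}_K'(r)}{\mathrm{sn}_K(r)},
\]
which is precisely the displayed pointwise monotonicity of $\mathcal{A}(r,\theta)/\mathcal{A}_K(r)$.

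To pass from the radial statement to volumes, I would use the elementary ratio lemma: if $F,G\colon (0,\infty)\to[0,\infty)$ satisfy that $F/G$ is non-increasing, then $\int_0^r F\,/\int_0^r G$ is also non-increasing. A first application, to $\mathcal{A}(\cdot,\theta)\,\mathbf{1}_{\{r<c(\theta)\}}$ versus $\mathcal{A}_K$, shows that the sphere area ratio is non-increasing in $r$ after integration over $\theta\in S^{n-1}$ (the cut-locus cutoff only improves the inequality because $\mathcal{A}_K>0$). A second application to those integrated sphere areas yields the desired monotonicity of $V(r)/V(K,r)$. The limit as $r\to 0^+$ equals $1$ because $\exp_o$ is a first-order isometry at the origin, so $V(r) \sim V(K,r) \sim \omega_n r^n$. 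Combining the monotonicity with this limit gives $V(r) \le V(K,r)$ for all $r>0$.

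The \textbf{main obstacle} is the bookkeeping at the cut locus: the Riccati argument is classical only inside the star-shaped open set $\Omega = \{r\theta:\, 0 \le r < c(\theta)\}$, yet the volume integral involves all of $M$. The standard resolution is to integrate radial densities only on $\Omega$ and observe that this truncation makes the ratio smaller, not larger, so monotonicity is preserved; alternatively one invokes the well-known fact (Calabi's trick) that the inequality $\Delta r \le (n-1)\mathrm{sn}_K'/\mathrm{sn}_K$ holds globally in the distributional sense. Either route completes the argument without further technical complication.
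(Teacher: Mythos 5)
The paper offers no proof of this statement: it is quoted as the classical Bishop--Gromov volume comparison theorem with a citation to Bishop--Crittenden, so there is no in-paper argument to compare against. Your outline is the standard textbook proof --- Bochner formula plus Cauchy--Schwarz giving the Riccati inequality for $\Delta r$, Sturm comparison against $(n-1)\,\mathrm{sn}_K'/\mathrm{sn}_K$, the integrated-ratio lemma applied twice, and the observation that truncating at the cut locus only decreases the ratio --- and it is correct as stated; the limit $V(r)/V(K,r)\to 1$ then upgrades monotonicity to the inequality $V(r)\le V(K,r)$. (For $K>0$ one should tacitly restrict to $r\le \pi/\sqrt{K}$, where $\mathrm{sn}_K>0$; this is harmless here since the paper only invokes the case $K=0$.)
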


When    $M^K=\mathbb R^n,$  we obtain:    

\begin{cor}\label{volume}  We have 
$$V(r)\leq \omega_n r^n$$
holds for  all $r>0,$ where $\omega_{n}$ denotes  the standard Euclidean volume of the unit ball in $\mathbb R^n.$
\end{cor}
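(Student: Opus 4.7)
The plan is to derive the corollary directly from the preceding Bishop--Gromov volume comparison theorem by specializing the constant $K$ to zero. Since $M$ is assumed throughout the paper to have non-negative Ricci curvature, the hypothesis ${\rm{Ric}}_M \geq (n-1)Kg$ is satisfied with $K = 0$, which is the case that reduces the model space form to Euclidean space $\mathbb{R}^n$.

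First I would recall that the $n$-dimensional space form of constant sectional curvature $0$ is precisely $\mathbb{R}^n$ equipped with the standard Euclidean metric, so $M^0 = \mathbb{R}^n$. The Riemannian volume of a geodesic ball of radius $r$ in $\mathbb{R}^n$ is the classical Euclidean expression
$$V(0, r) = \omega_n r^n,$$
where $\omega_n$ denotes the volume of the Euclidean unit ball (this is purely a computation in Euclidean geometry and carries no hypothesis on $M$).

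Next I would invoke the Bishop--Gromov theorem stated just above: under the assumption ${\rm{Ric}}_M \geq 0$, the volume ratio $V(r)/V(0,r)$ is a non-increasing function of $r>0$ whose limit as $r\to 0^{+}$ equals $1$. Monotonicity together with this normalization at the origin forces $V(r)/V(0,r) \leq 1$ for every $r > 0$, hence
$$V(r) \leq V(0, r) = \omega_n r^n,$$
which is the desired inequality.

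There is no genuine obstacle here, since the corollary is an immediate specialization of a theorem that has already been quoted. The only thing to be careful about is verifying the normalization $\lim_{r\to 0^{+}} V(r)/V(0,r) = 1$ (so that monotonicity yields the inequality in the correct direction) and identifying the Euclidean space as the $K=0$ model in the theorem. Both steps are standard and require no computation beyond the statement of the Bishop--Gromov theorem itself.
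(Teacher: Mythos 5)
Your proposal is correct and follows exactly the route the paper intends: the corollary is the $K=0$ specialization of the Bishop--Gromov theorem quoted immediately above it, with $M^0=\mathbb{R}^n$, $V(0,r)=\omega_n r^n$, and the inequality extracted from the monotonicity of the volume ratio together with its normalization to $1$ as $r\to 0^{+}$. The paper gives no further argument beyond this specialization, so there is nothing to add.
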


When ${\rm{Ric}}_M\geq0,$   Calabi-Yau (see  \cite{S-Y})  showed  that 

\begin{theorem}\label{volume1}  Assume   that $M$ is noncompact.  If  ${\rm{Ric}}_M\geq0,$  then $M$ has an infinite volume. More precisely, for any $\epsilon_0>0,$ there exists a constant $c=c(o, n, \epsilon_0)>0$ such that 
$$V(r)\geq c r$$
holds for  all $r\geq \epsilon_0.$
\end{theorem}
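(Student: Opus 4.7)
The plan is to combine the Bishop-Gromov volume comparison theorem (with model curvature $K=0$) with the existence of a ray emanating from $o.$ Since $M$ is complete and noncompact, Hopf-Rinow furnishes a minimizing geodesic ray $\gamma\colon[0,\infty)\to M$ with $\gamma(0)=o$ and $d(\gamma(t),o)=t$ for every $t\geq 0.$

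Fix $t>1$ and write $V_p(r)$ for the volume of the geodesic ball of radius $r$ centered at $p,$ so that $V(r)=V_o(r)$ in the notation of the paper. For any $x$ with $d(x,o)<1,$ the triangle inequality gives $t-1<d(x,\gamma(t))<t+1,$ so the fixed unit ball $B(o,1)$ is contained in the annulus $B(\gamma(t),t+1)\setminus B(\gamma(t),t-1).$ Applying the volume comparison theorem with basepoint $\gamma(t)$ rather than $o,$ the ratio $r\mapsto V_{\gamma(t)}(r)/r^n$ is non-increasing, hence
$$V_{\gamma(t)}(t+1)\leq\left(\frac{t+1}{t-1}\right)^{n} V_{\gamma(t)}(t-1).$$
Combining this with the annular inclusion yields
$$V(1)\leq V_{\gamma(t)}(t+1)-V_{\gamma(t)}(t-1)\leq\left[\left(\frac{t+1}{t-1}\right)^{n}-1\right]V_{\gamma(t)}(t-1).$$
The bracketed factor is $O(1/t)$ as $t\to\infty,$ so there exist $t_0>1$ and $c'=c'(o,n)>0$ with $V_{\gamma(t)}(t-1)\geq c' t$ for all $t\geq t_0.$

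To transfer this bound to a ball centered at $o,$ observe that $B(\gamma(t),t-1)\subseteq B(o,2t-1),$ which gives $V(2t-1)\geq V_{\gamma(t)}(t-1)\geq c' t.$ Setting $r=2t-1$ produces $V(r)\geq (c'/2)r$ for every $r\geq R_0:=2t_0-1.$ On the intermediate range $r\in[\epsilon_0,R_0],$ monotonicity of $V$ furnishes $V(r)\geq V(\epsilon_0)\geq (V(\epsilon_0)/R_0)\,r,$ so taking $c$ to be the minimum of $c'/2$ and $V(\epsilon_0)/R_0$ yields the desired constant $c=c(o,n,\epsilon_0)>0$ with $V(r)\geq cr$ for all $r\geq\epsilon_0.$

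The argument contains no deep obstacle; the crux is the geometric observation that the fixed ball $B(o,1)$ is squeezed into a thin annulus of width two at distance $t$ from $\gamma(t),$ which via Bishop-Gromov forces a linear lower bound on the inner ball volume $V_{\gamma(t)}(t-1).$ The main care to exercise is the bookkeeping of constants, particularly the transition between the asymptotic regime $r\geq R_0$ and the small-$r$ regime $[\epsilon_0,R_0],$ which is absorbed by monotonicity of $V.$
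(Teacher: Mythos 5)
Your proposal is correct. The paper does not prove this statement at all---it is quoted as a classical result of Calabi--Yau with a citation to Schoen--Yau's \emph{Lectures on Differential Geometry}---so there is no internal proof to compare against; what you have written is precisely the standard argument from that source: squeeze the fixed ball $B(o,1)$ into the width-two annulus $B(\gamma(t),t+1)\setminus B(\gamma(t),t-1)$ about a point far out on a ray, use Bishop--Gromov centered at $\gamma(t)$ to bound the annulus volume by $O(1/t)\cdot V_{\gamma(t)}(t-1)$, and transfer the resulting linear lower bound back to balls about $o$. All the steps check out, including the bookkeeping on $[\epsilon_0,R_0]$ via monotonicity of $V$. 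The only pedantic quibble is the attribution of the ray to Hopf--Rinow: completeness plus noncompactness gives minimizing geodesics $\gamma_k$ from $o$ to a divergent sequence of points, and the ray is obtained as a limit of a subsequence of these (compactness of the unit sphere in $T_oM$); Hopf--Rinow alone does not literally produce it. This does not affect the validity of the argument.
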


\subsection{Estimates of Heat Kernels}~

Let $M$ be a complete Riemannian manifold of dimension $n,$ with  Laplace-Beltrami operator $\Delta.$ 
Fix a reference point $o\in M.$ 
We denote by  $\rho(x)$  the Riemannian distance function of $x$ from $o.$ 
The heat kernel $p(t, x, y)$ of  $M$  is  the minimal positive fundamental solution to the  following heat equation 
$$\Big(\Delta-\frac{\partial}{\partial t}\Big)u(t, x)=0.$$
\ \ \ \  Li-Yau \cite{Li-Yau} (see \cite{S-Y} also) obtained the  two-sided estimates     of  $p(t,o,x).$ 
\begin{theorem} Assume that $M$ has non-negative Ricci curvature.  Then for any $0<\epsilon<1,$ there exist constants $C_1=C_1(\epsilon, n)>0$ and $C_2=C_2(\epsilon, n)>0$
such that 
$$\frac{C_1}{V(\sqrt t)}e^{-\frac{\rho(x)^2}{(4-\epsilon)t}}\leq p(t, o,x)\leq \frac{C_2}{V(\sqrt t)}e^{-\frac{\rho(x)^2}{(4+\epsilon)t}}$$
holds for all $x\in M$ and all $t>0.$
\end{theorem}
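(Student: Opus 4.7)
The plan is to derive the two-sided Gaussian bounds from the Li--Yau differential Harnack inequality, combined with the Bishop--Gromov volume comparison already available in the excerpt. The first step is the gradient estimate: for any positive solution $u(t,x)$ of the heat equation on $M$ with $\mathrm{Ric}\geq 0$,
$$\frac{|\nabla u|^2}{u^2}-\alpha\,\frac{\partial_t u}{u}\leq \frac{n\alpha^2}{2t}$$
for every $\alpha>1$. This is established by applying the parabolic maximum principle to the auxiliary quantity $F=t(|\nabla\log u|^2-\alpha\,\partial_t\log u)$, expanding $\Delta F$ via the Bochner formula, and using $\mathrm{Ric}\geq 0$ to discard the nonnegative curvature term. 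Integrating the resulting inequality along a space-time path joining $(s,x)$ to $(t,y)$ with $s<t$ then yields the parabolic Harnack inequality
$$u(s,x)\leq u(t,y)\Bigl(\frac{t}{s}\Bigr)^{n\alpha/2}\exp\!\Bigl(\frac{\alpha\,\rho(x,y)^2}{4(t-s)}\Bigr).$$

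With Harnack in hand, I would next establish the on-diagonal estimate $p(t,x,x)\asymp 1/V(\sqrt{t})$. The upper bound comes from a Davies-type argument using the semigroup identity $p(2t,x,x)=\int_M p(t,x,z)^2\,dv(z)$ together with the integrated maximum principle for a weighted $L^2$-norm of $p(t,x,\cdot)$. The lower bound follows from conservativeness of the heat semigroup $\int_M p(t,x,y)\,dv(y)=1$ combined with the Bishop--Gromov volume doubling property: the heat cannot escape faster than a Gaussian tail, so a definite proportion of the mass lies in $B(x,\sqrt{t})$, and Harnack converts this averaged mass into a pointwise bound at $x$.

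To reach the full off-diagonal statement I would then chain the Harnack inequality along a minimizing geodesic from $o$ to $x$. For the lower bound, partition the geodesic into $N\sim \rho(x)^2/t$ equal pieces and iteratively apply Harnack between consecutive times $jt/N$ and $(j+1)t/N$; this produces precisely the exponential factor $\exp(-\rho(x)^2/(4-\epsilon)t)$. The matching upper bound is obtained either by a symmetric Harnack chain starting from the on-diagonal upper bound, or by Davies' exponential perturbation method applied directly to the heat semigroup. The main technical obstacle, and where the sharpness of the constants $4\pm\epsilon$ becomes delicate, is tracking the Gaussian exponent through the iterated Harnack without dimensional loss: reaching the limit constant $4$ requires simultaneously sending $\alpha\to 1^+$ in the gradient estimate and $N\to\infty$ in the chain, then balancing the polynomial blow-up $(t/s)^{n\alpha/2}$ against the volume-doubling inequality $V(\sqrt{t})\leq C\,N^{n/2}V(\sqrt{t/N})$ coming from Bishop--Gromov. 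Everything else is bookkeeping; the nonnegative Ricci hypothesis enters only through (i) the absence of curvature terms in the Bochner identity and (ii) the volume comparisons used to interchange $V(\sqrt{t})$ across different scales.
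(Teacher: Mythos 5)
The paper does not prove this statement at all: it is quoted verbatim as the classical two-sided heat-kernel estimate of Li--Yau, with a citation to \cite{Li-Yau} (and \cite{S-Y}), so there is no internal proof to compare against. Your outline is essentially the original Li--Yau argument --- gradient estimate via Bochner and the parabolic maximum principle, integration along space-time paths to get the Harnack inequality, on-diagonal bounds $p(t,x,x)\asymp V(\sqrt t)^{-1}$ from stochastic completeness plus Bishop--Gromov, and then off-diagonal bounds --- and as a reconstruction of the cited theorem it is broadly sound. Two corrections are worth making. First, for the Gaussian \emph{upper} bound, the option ``a symmetric Harnack chain starting from the on-diagonal upper bound'' does not work: the Harnack inequality bounds earlier values by later values, so applying it from $(t,x)$ toward the diagonal yields $p(t,o,x)\leq C\,p(2t,o,o)\,e^{+\rho(x)^2/(4t)}$, with the exponential on the wrong side. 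The upper bound genuinely requires the weighted/integrated $L^2$ estimate (the Davies--Gaffney or Li--Yau integral lemma, which you name as the alternative) combined with the semigroup identity $p(2t,o,x)=\int_M p(t,o,z)p(t,z,x)\,dv(z)$ and Cauchy--Schwarz; only the \emph{lower} bound comes from Harnack. Second, the lower bound does not need an $N$-fold chain: a single application of Harnack between $(\epsilon' t,o)$ and $(t,x)$ already gives $p(t,o,x)\geq c\,V(\sqrt t)^{-1}\exp\bigl(-\rho(x)^2/(4(1-\epsilon')t)\bigr)$, which is exactly the $(4-\epsilon)$ form after relabeling; and since $\mathrm{Ric}\geq 0$ here, the sharp gradient estimate with $\alpha=1$ is available, so the $\alpha\to1^+$ limiting procedure you describe is not needed. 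With those repairs your sketch matches the standard proof of the result the paper imports.
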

 Set 
$$G(o, x)=2\int_0^\infty p(t, o,x)dt.$$
 This infinite  integral is  convergent if only if $M$ is non-parabolic. When $M$ is non-parabolic, 
 $G(o, x)$ is  the unique   minimal positive global Green function of $\Delta/2$ for $M$ with a pole at $o,$ i.e., 
$$\begin{cases}
 -\frac{1}{2}\Delta G(o,x)=\delta_o(x),  \ \ &    x\in M; \\
  G(o,x)>0, \ \  &  x\in\partial M; \\
  \displaystyle{\lim_{\rho(x)\to\infty}}G(o,x)=0.
\end{cases}$$
where $\delta_o$ is the Dirac's delta  function with a pole at $o.$ 

When $M$ is non-parabolic with non-negative Ricci curvature,  Li-Yau \cite{Li-Yau}  (see \cite{S-Y} also) gave  two-sided bounds of $G(o,x)$ as follows.  
 \begin{theorem}\label{kkk}  Assume that  $M$ has   non-negative Ricci curvature. If $M$ is non-parabolic,  then   there exist constants $A, B>0$ depending only on $n$ 
such that 
 $$A\int_{\rho(x)}^\infty\frac{tdt}{V(t)}\leq G(o,x)\leq B\int_{\rho(x)}^\infty\frac{tdt}{V(t)}$$
holds for all $x\in M.$
\end{theorem}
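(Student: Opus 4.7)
The plan is to integrate the two-sided Gaussian heat kernel bound from the preceding theorem against time, using the identity $G(o,x)=2\int_0^\infty p(t,o,x)\,dt$ together with Bishop-Gromov volume comparison under $\mathrm{Ric}\ge 0$. I fix any $0<\epsilon<1$ and work with the constants $C_1,C_2$ from the heat kernel estimate.

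For the lower bound I insert the Gaussian lower bound and restrict the $t$-integration to $t\ge\rho(x)^2$, on which $\rho(x)^2/((4-\epsilon)t)\le 1/(4-\epsilon)$ so the exponential is bounded below by a positive constant. The substitution $s=\sqrt t$ converts $\int_{\rho(x)^2}^\infty dt/V(\sqrt t)$ directly into $2\int_{\rho(x)}^\infty s\,ds/V(s)$, yielding the lower bound with $A$ proportional to $C_1$.

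For the upper bound I insert the Gaussian upper bound and split the $t$-integral at $t=\rho(x)^2$. On the large-time piece $t\ge\rho(x)^2$ the Gaussian is bounded by $1$, and the same change of variable gives a constant multiple of $\int_{\rho(x)}^\infty s\,ds/V(s)$. The substantive work is the small-time piece
$$I:=\int_0^{\rho(x)^2}\frac{1}{V(\sqrt t)}\exp\!\left(-\frac{\rho(x)^2}{(4+\epsilon)t}\right)dt,$$
on which the Gaussian is small but $V(\sqrt t)$ is also small. Here I would invoke Bishop-Gromov in the form that $r\mapsto V(r)/r^n$ is non-increasing under $\mathrm{Ric}\ge 0$ (this is the sharp form behind Corollary \ref{volume}), yielding $V(\sqrt t)\ge (\sqrt t/\rho(x))^n V(\rho(x))$ for $t\le\rho(x)^2$. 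Rescaling $t=\rho(x)^2\tau$ then reduces $I$ to
$$I\le\frac{\rho(x)^2}{V(\rho(x))}\int_0^1\tau^{-n/2}\exp\!\left(-\frac{1}{(4+\epsilon)\tau}\right)d\tau,$$
and the inner integral is a finite constant depending only on $n$ and $\epsilon$, since exponential decay at $\tau=0^{+}$ dominates any polynomial factor.

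It remains to absorb $\rho(x)^2/V(\rho(x))$ into the tail $\int_{\rho(x)}^\infty s\,ds/V(s)$. Bishop-Gromov also yields the doubling bound $V(2r)\le 2^n V(r)$, so $V(s)\le 2^n V(\rho(x))$ for $s\in[\rho(x),2\rho(x)]$, giving
$$\int_{\rho(x)}^\infty\frac{s\,ds}{V(s)}\ \ge\ \int_{\rho(x)}^{2\rho(x)}\frac{s\,ds}{2^n V(\rho(x))}=\frac{3\,\rho(x)^2}{2^{n+1}V(\rho(x))},$$
which closes the argument and fixes $B$ in terms of $n,\epsilon,C_2$. The main obstacle throughout is this small-time piece: without volume doubling (which fails once $\mathrm{Ric}\ge 0$ is dropped), the delicate balance between Gaussian decay and the factor $1/V(\sqrt t)$ cannot be recovered from the tail integral alone; the lower bound and the large-time upper bound, by contrast, are essentially routine substitutions.
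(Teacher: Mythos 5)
Your argument is correct: the lower bound and the large-time part of the upper bound follow from the substitution $t=s^2$, and your treatment of the small-time piece via the sharp Bishop--Gromov monotonicity of $V(r)/r^n$ and the resulting doubling bound is exactly what is needed to absorb $\rho(x)^2/V(\rho(x))$ into the tail integral. The paper does not prove this statement at all --- it is quoted from Li--Yau \cite{Li-Yau} --- and your derivation is precisely the standard one behind that reference, so there is nothing to fault and no genuine divergence to report.
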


  \section{Nevanlinna's Functions and First Main Theorem}

Let  $(M, g)$ be   a   non-parabolic complete noncompact  K\"ahler manifold  with non-negative Ricci curvature,
  of complex dimension $m.$ 
  The  K\"ahler form $\alpha$ of $M$ associated to metric   $g$ is defined by 
 $$\alpha=\frac{\sqrt{-1}}{\pi}\sum_{i,j=1}^mg_{i\bar j}dz_i\wedge d\bar z_{j}.$$

    \subsection{Construction of $\Delta(r)$}~

 Fix a  reference point $o\in M.$ 
  Let   $V(r)$ be  the Riemannian volume of  $B(r),$ where  $B(r)$  denotes  the geodesic ball  centered at $o$ with radius  $r$ in $M.$   Note that  the non-parabolicity of $M$ implies that
 $$\int_1^\infty\frac{tdt}{V(t)}<\infty.$$
Thus, we have the unique minimal positive global Green function $G(o,x)$ of the half Laplace-Beltrami operator $\Delta/2$  for $M,$  
which can be written as 
$$G(o, x)=2\int_0^\infty p(t, o,x)dt,$$
where $p(t,o,x)$ is the heat kernel of $M.$
  Let $\rho(x)$ be the Riemannian distance function of $x$ from $o.$ Using Theorem \ref{kkk}, there exist constants $A, B>0$ such that  
 \begin{equation}\label{Gr}
 A\int_{\rho(x)}^\infty\frac{tdt}{V(t)}\leq G(o,x)\leq B\int_{\rho(x)}^\infty\frac{tdt}{V(t)}
 \end{equation}
holds for all $x\in M.$ 
 For $r>0,$ define   
 $$\Delta(r)=\left\{x\in M: \    G(o,x)>A\int_r^\infty\frac{tdt}{V(t)}\right\}.$$
  Since 
$$\lim_{x\to o}G(o,x)=\infty, \ \ \ \     \lim_{\rho(x)\to\infty}G(o,x)=0,$$
  one can conclude immediately    that  $\Delta(r)$ is a precompact  domain containing $o$  such  that $\overline{\Delta(r_1)}\subseteq\Delta(r_2)$ whenever $r_1<r_2$ and that 
 $$ \ \ \  \lim_{r\to0}\Delta(r)\to \emptyset, \ \ \ \   \lim_{r\to\infty}\Delta(r)=M.$$
Thus,  the family 
     $\{\Delta(r)\}_{r>0}$ exhausts $M,$ i.e.,  for any   sequence $\{r_n\}_{n=1}^\infty$ with    $0<r_1<r_2<\cdots\to \infty,$ 
     we have       
 $$\bigcup_{n=1}^\infty\Delta(r_n)=M, \ \ \ \     \emptyset\not=\Delta(r_1)\subseteq\overline{\Delta(r_1)}\subseteq\Delta(r_2)\subseteq\overline{\Delta(r_2)}\subseteq\cdots$$    
The boundary $\partial\Delta(r)$ of $\Delta(r)$ can be described  as
 $$\partial\Delta(r)=\left\{x\in M: \    G(o,x)=A\int_r^\infty\frac{tdt}{V(t)}\right\}.$$
  By  Sard's theorem,   $\partial\Delta(r)$  is smooth  for almost all $r>0.$  
    
     Set
 $$g_r(o,x)=G(o,x)-A\int_r^\infty\frac{tdt}{V(t)}.$$
Evidently,    $g_r(o,x)$ is  the positive Green function of $\Delta/2$ for $\Delta(r)$ with a pole at $o$ satisfying Dirichlet boundary condition, i.e., 
      $$\begin{cases}
 -\frac{1}{2}\Delta g_r(o,x)=\delta_o(x),  \ \ &    x\in\Delta(r); \\
 g_r(o,x)=0, \ \  &  x\in\partial\Delta(r),
\end{cases}$$
 where $\delta_o$ is the Dirac's delta  function with a pole at $o.$ 
Let  $\pi_r$ stand for  the harmonic measure  on $\partial\Delta(r)$ with respect to $o,$ defined by
  $$d\pi_r=\frac{1}{2}\frac{\partial g_r(o,x)}{\partial{\vec{\nu}}}d\sigma_r,$$
  where  $\partial/\partial \vec\nu$ is the inward  normal derivative on $\partial \Delta(r),$ and $d\sigma_{r}$ is the  induced Riemannian area element of 
$\partial \Delta(r).$

     \subsection{Nevanlinna's Functions}~\label{sec32}

   In what follows, we will introduce  Nevanlinna's functions.  Let $f: M\to X$ be a meromorphic mapping, where $X$ is a complex projective manifold.  
Let $(L, h)$ be a Hermitian holomorphic line bundle over $X,$ with the Chern form $$c_1(L,h):=-dd^c\log h,$$ where 
$$d=\partial+\overline{\partial}, \ \ \ \     d^c=\frac{\sqrt{-1}}{4\pi}\big(\overline{\partial}-\partial\big)$$ 
 so that $$dd^c=\frac{\sqrt{-1}}{2\pi}\partial\overline{\partial}.$$

Fix a divisor  $D\in|L|,$  where $|L|$ is the complete linear system of $L.$
Let $s_D$ be the canonical section  associated to $D,$ i.e., $s_D$ is a holomorphic section of $L$ over $X$ with zero divisor $D.$ 
The characteristic function, proximity function, counting function and simple counting function of $f$ are respectively defined  by 
   \begin{eqnarray*}
T_f(r, L)&=& -\frac{1}{4}\int_{\Delta(r)}g_r(o,x)\Delta\log(h\circ f)dv, \\
m_f(r,D)&=&\int_{\partial\Delta(r)}\log\frac{1}{\|s_D\circ f\|}d\pi_r, \\
N_f(r,D)&=& \frac{\pi^m}{(m-1)!}\int_{f^*D\cap\Delta(r)}g_r(o,x)\alpha^{m-1}, \\
\overline{N}_f(r, D)&=& \frac{\pi^m}{(m-1)!}\int_{f^{-1}(D)\cap\Delta(r)}g_r(o,x)\alpha^{m-1},
 \end{eqnarray*}
 where   $dv$ is the  Riemannian volume element of $M.$

Locally, write $s_D=\tilde s_De,$ in which  $e$ is a local holomorphic   frame  of $L$ and  $\tilde s_D$ is a holomorphic function. 
  Using  Poincar\'e-Lelong formula  (see, e.g., \cite{gri}),  we get  
    $$\left[D\right]= dd^c\big[\log|\tilde s_D|^2\big]$$
    in the sense of currents.   So, we obtain  the alternative expressions of $T_f(r,L)$ and $N_f(r,D)$  as follows 
    \begin{eqnarray*}
 T_f(r, L) &=& \frac{\pi^m}{(m-1)!}\int_{\Delta(r)}g_r(o,x)f^*c_1(L,h)\wedge\alpha^{m-1}  \\
 &=&   \frac{1}{4}\int_{\Delta(r)}g_r(o,x)\|df\|^2dv 
     \end{eqnarray*}
     and 
     \begin{eqnarray*}
\ \ \ \ \  \ \ \  \  N_f(r,D)&=& \frac{\pi^m}{(m-1)!}\int_{\Delta(r)}g_r(o,x)dd^c\big[\log|\tilde s_D\circ f|^2\big]\wedge\alpha^{m-1} \\
 &=& \frac{1}{4}\int_{\Delta(r)}g_r(o,x)\Delta\log|\tilde s_D\circ f|^2dv.
   \end{eqnarray*}

   \subsection{First Main Theorem}~
   
To establish the First Main Theorem of $f,$ we need   Green-Dynkin formula (see  \cite{at, Dong1, DY}) as follows. 
\begin{lemma}[Green-Dynkin formula]\label{dynkin} Let $\phi$ be a $\mathscr C^2$ function on  $M$ outside a polar set of singularities at most. Assume that $\phi(o)\not=\infty.$  Then
$$\int_{\partial \Delta(r)}\phi d\pi_{r}-\phi(o)=\frac{1}{2}\int_{\Delta(r)}g_r(o,x)\Delta \phi dv.$$
\end{lemma}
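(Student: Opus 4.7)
The plan is to derive the identity as a direct application of Green's second identity to the pair $(\phi, g_r(o,\cdot))$ on $\Delta(r)$ with a small geodesic ball around the pole $o$ excised, followed by a limit as this ball shrinks. Polar singularities of $\phi$ (which by hypothesis are disjoint from $o$) will be handled at the end by an excision/capacity argument.

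First I would assume $\phi$ is $\mathscr{C}^2$ throughout $\Delta(r)$. Fix $\varepsilon>0$ small enough that $\overline{B(o,\varepsilon)}\subset\Delta(r)$, and put $U_\varepsilon=\Delta(r)\setminus\overline{B(o,\varepsilon)}$. Since $g_r(o,\cdot)$ is harmonic on $U_\varepsilon$, Green's second identity yields
$$\int_{U_\varepsilon} g_r\,\Delta\phi\,dv=\int_{\partial U_\varepsilon}\left(g_r\frac{\partial\phi}{\partial\vec n}-\phi\frac{\partial g_r}{\partial\vec n}\right)d\sigma,$$
where $\vec n$ denotes the outward unit normal to $U_\varepsilon$. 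On $\partial\Delta(r)$ the Dirichlet condition $g_r=0$ kills the first boundary term; since the outward normal to $U_\varepsilon$ there is $-\vec\nu$, the remaining piece equals $+\int_{\partial\Delta(r)}\phi\cdot\partial_{\vec\nu}g_r\,d\sigma_r=2\int_{\partial\Delta(r)}\phi\,d\pi_r$ by the definition of harmonic measure.

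The crux of the $\varepsilon\to 0$ passage is on $\partial B(o,\varepsilon)$. From the local fundamental-solution asymptotic $g_r(o,x)\sim c_m\rho(x)^{2-2m}$ and the fact that the surface area is $O(\varepsilon^{2m-1})$, one sees $\int_{\partial B(o,\varepsilon)}g_r\,\partial_{\vec n}\phi\,d\sigma=O(\varepsilon)\to 0$ using boundedness of $d\phi$ near $o$. For the second term, applying the divergence theorem to $-\tfrac{1}{2}\Delta g_r=\delta_o$ on $B(o,\varepsilon)$ gives $\int_{\partial B(o,\varepsilon)}\partial_{\vec n_{B_\varepsilon}}g_r\,d\sigma=-2$; since the outward normal to $U_\varepsilon$ along $\partial B(o,\varepsilon)$ is the opposite of $\vec n_{B_\varepsilon}$, and $\phi$ is continuous at $o$ with $\phi(o)\neq\infty$, this boundary contribution tends to $-2\phi(o)$. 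Combining the two boundary pieces with monotone/dominated convergence on the interior integral and dividing by $2$ yields the desired formula in the smooth case.

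Finally, to allow $\phi$ to have polar singularities on a set $\Sigma\subset\Delta(r)\setminus\{o\}$, I would cover $\Sigma$ by open neighborhoods $\Omega_k$ of vanishing capacity, apply the smooth-case identity on $\Delta(r)\setminus\Omega_k$, and pass to the limit $k\to\infty$. The main obstacle, and the only subtle step, is verifying that the extra boundary contributions over $\partial\Omega_k$ vanish in the limit; this relies on the standard facts that $\pi_r$ assigns no mass to polar subsets of $\partial\Delta(r)$ and that $g_r$ is uniformly bounded on compact sets disjoint from $o$, together with a capacity estimate to control both $\int_{\partial\Omega_k}\phi\,\partial g_r$ and $\int_{\partial\Omega_k}g_r\,\partial\phi$. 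In the paper's applications $\phi$ is explicitly of the form $\log\|s_D\circ f\|^2$ or $\log(h\circ f)$, so the singular set is an analytic subvariety and this extension is routine.
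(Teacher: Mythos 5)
Your argument is essentially correct, but it is worth noting that the paper itself gives no proof of this lemma at all: it is quoted from the references \cite{at, Dong1, DY}, where (as the name ``Green--Dynkin'' indicates) it is obtained probabilistically, by applying Dynkin's formula to the Brownian motion $X_t$ started at $o$ and stopped at the first exit time $\tau_r$ from $\Delta(r)$, and then identifying $\mathbb{E}_o[\phi(X_{\tau_r})]=\int_{\partial\Delta(r)}\phi\,d\pi_r$ (the exit distribution is the harmonic measure) and $\mathbb{E}_o\big[\int_0^{\tau_r}u(X_t)\,dt\big]=\int_{\Delta(r)}g_r(o,x)\,u\,dv$ (the Green function is the expected occupation density). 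Your route via Green's second identity on $\Delta(r)\setminus \overline{B(o,\varepsilon)}$ is the classical analytic counterpart, and your bookkeeping of signs, normals and the two boundary limits is right; note that since $M$ is non-parabolic with ${\rm Ric}\geq 0$ one automatically has $2m\geq 4$, so the asymptotic $g_r(o,x)\sim c_m\rho(x)^{2-2m}$ you invoke is indeed the relevant one. The trade-off between the two approaches shows up exactly where you flag it: in the probabilistic derivation the polar singular set of $\phi$ costs nothing, because Brownian motion almost surely never hits a polar set, so Dynkin's formula applies directly to $\phi$ composed with the stopped process; in your analytic derivation the excision of $\Omega_k$ around the singular set is the one step that requires genuine extra input, namely a bound on $\int_{\partial\Omega_k}g_r\,\|\nabla\phi\|\,d\sigma$, which needs quantitative control of $\|\nabla\phi\|$ near the singularities and not just a capacity estimate. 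For the functions actually used in the paper ($\phi=\log\|s_D\circ f\|$, $\log(h\circ f)$, $\log\xi$) the singular set is an analytic subvariety and $\phi$ is locally the logarithm of the modulus of a meromorphic function, so the standard Stokes-with-singularities argument of Griffiths--King closes this gap; you should say this explicitly rather than leave it at ``routine,'' since it is the only nontrivial point of the proof.
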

 Assume   that $f(o)\not\in{\rm{Supp}}D.$ Apply  Green-Dynkin formula to $\log\|s_D\circ f\|,$  we are led to  
  \begin{eqnarray*}
  && m_f(r,D)-\log\frac{1}{\|s_D\circ f(o)\|} \\
&=& \frac{1}{2}\int_{\Delta(r)}g_r(o,x)\Delta\log\frac{1}{\|s_D\circ f\|}dv     \\
&=& -\frac{1}{4}\int_{\Delta(r)}g_r(o,x)\Delta\log(h\circ f)dv-\frac{1}{4}\int_{\Delta(r)}g_r(o,x)\Delta\log|\tilde s_D\circ f|^2dv  \\
&=& T_f(r,L)-N_f(r,D). 
 \end{eqnarray*}

To conclude, we obtain:  
\begin{theorem}[First Main Theorem]  Assume   that $f(o)\not\in{\rm{Supp}}D.$ Then
$$T_f(r,L)+\log\frac{1}{\|s_D\circ f(o)\|}=m_f(r,D)+N_f(r,D).$$
\end{theorem}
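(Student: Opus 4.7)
The plan is to apply the Green-Dynkin formula (Lemma \ref{dynkin}) to the test function $\phi = \log(1/\|s_D\circ f\|)$. The assumption $f(o)\notin \mathrm{Supp}\, D$ ensures $\phi(o)\neq\infty$, and the singularities of $\phi$ are concentrated on the polar set $f^{-1}(D)$, so the lemma applies. Direct substitution yields
$$m_f(r,D)-\log\frac{1}{\|s_D\circ f(o)\|}=\frac{1}{2}\int_{\Delta(r)}g_r(o,x)\,\Delta\log\frac{1}{\|s_D\circ f\|}\,dv,$$
using that the boundary integral of $\phi$ against $d\pi_r$ is, by construction, exactly $m_f(r,D)$.

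Next, I would split the integrand using the local presentation $s_D=\tilde s_D\, e$ where $e$ is a local holomorphic frame. Then $\|s_D\|^2 = h(e,e)\cdot |\tilde s_D|^2$, and passing to logarithms after pulling back by $f$,
$$-2\log\|s_D\circ f\|=-\log(h\circ f)-\log|\tilde s_D\circ f|^2,$$
so taking Laplacians gives
$$\Delta\log\frac{1}{\|s_D\circ f\|}=-\tfrac{1}{2}\Delta\log(h\circ f)-\tfrac{1}{2}\Delta\log|\tilde s_D\circ f|^2.$$
Inserting this decomposition into the identity from the previous step and multiplying by $\tfrac12$, the first term on the right becomes $-\tfrac14\int_{\Delta(r)}g_r\,\Delta\log(h\circ f)\,dv$, which is precisely $T_f(r,L)$ by its definition. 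The second term, by the Poincaré-Lelong identity $dd^c\log|\tilde s_D|^2=[D]$ invoked in Section \ref{sec32}, coincides with $-N_f(r,D)$. Rearranging gives the asserted equality.

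The only nontrivial point is the legitimacy of applying Green-Dynkin to a function with logarithmic singularities along $f^{-1}(D)\cap\Delta(r)$; this is what the phrase \emph{``outside a polar set of singularities at most''} in Lemma \ref{dynkin} is designed to accommodate, and the singular contribution is exactly absorbed into the counting term $N_f(r,D)$. The boundary piece is clean because, by Sard's theorem, $\partial\Delta(r)$ generically misses $f^{-1}(D)$, so $\|s_D\circ f\|$ is bounded away from $0$ and $\infty$ there. Everything else is bookkeeping against the definitions already fixed in Section \ref{sec32}.
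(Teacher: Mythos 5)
Your proposal is correct and follows essentially the same route as the paper: apply the Green--Dynkin formula to $\log(1/\|s_D\circ f\|)$, split via $\|s_D\|^2=h\,|\tilde s_D|^2$, and identify the two resulting integrals with $T_f(r,L)$ and $-N_f(r,D)$ using the definitions and the Poincar\'e--Lelong formula. The bookkeeping of the constants checks out, so nothing further is needed.
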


 \section{Gradient Estimates of Green Functions}
 
   Let $M$ be a non-parabolic complete  noncompact  K\"ahler manifold  with non-negative Ricci curvature, of complex dimension $m.$ 

To give a gradient estimate of $g_r(o,x),$  we need to introduce some   lemmas as follows. 
   \begin{lemma}\label{thm4} We have
 $$g_r(o,x)=A\int_t^r \frac{sds}{V(s)}, \ \ \ \    x\in\partial\Delta(t)$$
holds  for all $0<t\leq r,$ where $A$ is given by $(\ref{Gr}).$
  \end{lemma}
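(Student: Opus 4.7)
The statement looks like it should follow essentially by unpacking definitions, so my plan is to trace through the construction of $\Delta(r)$ and $g_r$ carefully and exploit the fact that, by design, the boundary $\partial\Delta(t)$ is a level set of the global Green function $G(o,\cdot)$.

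First I would recall that, by the definition of $\Delta(t)$ given in Section 3.1, the boundary admits the explicit description
\begin{equation*}
\partial\Delta(t) = \left\{x \in M : G(o,x) = A\int_t^\infty \frac{s\,ds}{V(s)}\right\}.
\end{equation*}
This is not circular: it follows from continuity of $G(o,\cdot)$ together with $\lim_{x \to o}G(o,x)=\infty$ and $\lim_{\rho(x)\to\infty}G(o,x)=0$, both of which were used in Section 3.1 to argue that $\{\Delta(r)\}$ is an exhaustion.

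Next, I would apply the definition
\begin{equation*}
g_r(o,x) = G(o,x) - A\int_r^\infty \frac{s\,ds}{V(s)}
\end{equation*}
and substitute the boundary identity for $x \in \partial\Delta(t)$ with $t \leq r$:
\begin{equation*}
g_r(o,x) = A\int_t^\infty \frac{s\,ds}{V(s)} - A\int_r^\infty \frac{s\,ds}{V(s)} = A\int_t^r \frac{s\,ds}{V(s)}.
\end{equation*}
That is the whole identity. The integral $\int_t^r$ is meaningful and nonnegative precisely because $t \leq r$, which matches the hypothesis.

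There is really no obstacle here; the only thing worth a sentence of comment is the legitimacy of the integral $\int_t^\infty s\,ds/V(s)$ on the right-hand side of the boundary description, which is finite by the non-parabolicity assumption recorded at the start of Section 3.1. Everything else is direct substitution, and no gradient estimate, maximum principle, or heat kernel bound (beyond what was already invoked to define $G$) is needed.
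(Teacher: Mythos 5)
Your proof is correct and is essentially the paper's own argument: the paper writes $g_r(o,x)=g_t(o,x)+A\int_t^r\frac{s\,ds}{V(s)}$ and uses $g_t(o,x)=0$ on $\partial\Delta(t)$, which is exactly your level-set identity $G(o,x)=A\int_t^\infty\frac{s\,ds}{V(s)}$ for $x\in\partial\Delta(t)$ phrased differently. No substantive difference.
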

 \begin{proof}    According to the definition of Green function for $\Delta(r),$  it is immediate that  for $0<t\leq r$
  \begin{eqnarray*} 
g_r(o,x)&=& G(o,x)-A\int_{r}^\infty\frac{tdt}{V(t)}  \\
&=& G(o,x)-A\int_{t}^\infty\frac{sds}{V(s)}ds +A\int_{t}^r\frac{sds}{V(s)}  \\
 &=& g_t(o,x)+A\int_{t}^r\frac{sds}{V(s)}.   
   \end{eqnarray*}
 Since $g_t(o,x)=0$ for $x\in\partial\Delta(t),$ 
  we obtain 
 $$g_r(o,x)=A\int_t^r \frac{sds}{V(s)}, \ \ \ \   x\in\partial\Delta(t).$$
 \end{proof} 
 
 Let $\nabla$ denote the gradient operator on any Riemannian manifold.  Cheng-Yau \cite{C-Y} proved the following theorem.

 \begin{lemma}\label{CY}
 Let $N$ be a complete Riemannian manifold of dimension $n\geq2.$     Let   $B(x_0, r)$ be any geodesic ball  centered at $x_0$  with radius $r$  in $N.$
 Then, there exists  a constant $C_n>0$ depending only on $n$  such that 
 $$\frac{\|\nabla u(x)\|}{u(x)}\leq \frac{C_n r^2}{r^2-d(x_0, x)^2}\left(|\kappa(r)|+\frac{1}{d(x_0, x)}\right)$$
 holds for any  non-negative  harmonic function $u$ on $B(x_0, r),$ 
 where $d(x_0, x)$ is the Riemannian distance between $x_0$ and $x,$ and $\kappa(r)$ is the lower bound of  Ricci curvature of  $B(x_0, r).$
 \end{lemma}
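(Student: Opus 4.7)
The plan is to follow the classical Cheng–Yau argument, whose engine is Bochner's formula applied to $v := \log u$. Since $u$ is positive harmonic, a direct computation gives $\Delta v = -\|\nabla v\|^2$. Writing $w := \|\nabla v\|^2$ and combining the Bochner–Weitzenb\"ock identity with the Newton-type inequality $\|\mathrm{Hess}\, v\|^2 \geq (\Delta v)^2/n = w^2/n$ and the Ricci lower bound $\mathrm{Ric} \geq -(n-1)|\kappa(r)|$ available on $B(x_0, r)$, I would obtain a differential inequality of the form
\begin{equation*}
\tfrac{1}{2}\Delta w \geq \tfrac{1}{n}w^2 - \langle \nabla v, \nabla w\rangle - (n-1)|\kappa(r)|\, w.
\end{equation*}
This is the only place where the Ricci bound enters, and it is what produces the $|\kappa(r)|$ term in the final estimate.

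Next, I would introduce a radial cutoff $\phi(x) = \eta\bigl(d(x_0,x)/r\bigr)$ with $\eta\in C^\infty([0,\infty);[0,1])$ equal to $1$ near $0$, supported in $[0,1)$, and satisfying $|\eta'|^2 \leq C\eta$ and $\eta'' \geq -C$. By the Laplacian comparison theorem on $B(x_0, r)$, together with Calabi's trick to handle the cut locus, this yields pointwise bounds of the form
\begin{equation*}
\|\nabla \phi\|^2 \leq \tfrac{C}{r^2}, \qquad \Delta\phi \geq -\tfrac{C}{r}\Bigl(\sqrt{|\kappa(r)|} + \tfrac{1}{d(x_0,x)}\Bigr);
\end{equation*}
the $d(x_0,x)^{-1}$ contribution is precisely what produces the $1/d(x_0,x)$ factor in the final bound.

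The heart of the argument is the maximum principle applied to $F := \phi^2 w$. Since $F$ vanishes on $\partial B(x_0, r)$, it attains its maximum at an interior point $p$ (if the maximum is $0$ the lemma is trivial). At $p$ one has $\nabla F(p) = 0$ and $\Delta F(p) \leq 0$. Expanding $\Delta(\phi^2 w) = \phi^2\Delta w + 4\phi\langle\nabla\phi,\nabla w\rangle + 2w(\phi\Delta\phi + \|\nabla\phi\|^2)$, substituting the Bochner inequality, and using $\nabla F(p) = 0$ to replace $\nabla w(p)$ by $-2w\,\nabla\phi/\phi$, one arrives at a quadratic inequality in $\phi^2 w(p)$ whose leading term is $\tfrac{2}{n}\phi^2 w^2$. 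Solving the quadratic and using $F(x) \leq F(p)$ for arbitrary $x\in B(x_0,r)$, then taking a square root, yields the stated bound.

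The main obstacle will be the bookkeeping in the maximum-principle step. After the substitution for $\nabla w$, cross terms of the form $\phi\|\nabla\phi\|\,\|\nabla v\|\,w$ (coming from $\langle\nabla v,\nabla w\rangle$) appear and must be absorbed into $\tfrac{1}{n}\phi^2 w^2$ via weighted Cauchy–Schwarz; the constants in $\eta$ and in Laplacian comparison have to be tracked through this absorption so that the coefficient of $w^2$ remains positive and the prefactor indeed takes the form $r^2/(r^2 - d(x_0,x)^2)$. A secondary but standard technical point is justifying the use of the maximum principle despite the non-smoothness of $d(x_0,\cdot)$ at the cut locus, which is handled by Calabi's trick of working with a smooth barrier from above.
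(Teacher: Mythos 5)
The paper does not prove this lemma: it is quoted verbatim as a classical theorem of Cheng--Yau, with a citation to their 1975 paper, so there is no in-paper argument to compare yours against. Your sketch is precisely the standard Cheng--Yau proof --- Bochner's formula applied to $\log u$, the Newton inequality $\|\mathrm{Hess}\,v\|^2\geq(\Delta v)^2/n$, a radial cutoff controlled by Laplacian comparison with Calabi's trick at the cut locus, and the maximum principle applied to $\phi^2\|\nabla\log u\|^2$ --- and that route is sound; the only work left is the bookkeeping you already identify. One point to watch: the argument as you set it up naturally yields a bound of the shape $C_n\bigl(\sqrt{|\kappa(r)|}+1/d(x_0,x)\bigr)$ (the curvature enters the quadratic inequality linearly in $w$, so a square root appears after solving), whereas the lemma as stated in the paper has $|\kappa(r)|$ without the square root; neither form dominates the other for all $\kappa$, so you should either track the constants to match the stated form or note the discrepancy. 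Since the paper only ever invokes this lemma on manifolds with $\mathrm{Ric}\geq 0$, i.e.\ with $\kappa(r)=0$ (see the proof of Theorem \ref{hh}), the distinction is immaterial for everything downstream.
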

  
  We  obtain an upper estimate  of  $\|\nabla g_r(o,x)\|$ as follows. 
   
 \begin{theorem}\label{hh} There exists a constant  $c_1>0$  independent of $r$ such that 
 $$\|\nabla g_r(o,x)\|\leq \frac{c_1}{r}\int_{r}^\infty\frac{tdt}{V(t)}, \ \ \ \     x\in\partial\Delta(r).$$
\end{theorem}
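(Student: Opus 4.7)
The plan is to reduce the desired gradient bound to a Cheng-Yau type estimate for the positive harmonic function $G(o,\cdot)$. The starting observation is that by the definition $g_r(o,x)=G(o,x)-A\int_r^\infty t\,dt/V(t)$, the function $g_r(o,\cdot)$ and $G(o,\cdot)$ differ by a constant, so
$$\|\nabla g_r(o,x)\|=\|\nabla G(o,x)\|.$$
Moreover $G(o,\cdot)$ is smooth, positive, and harmonic on $M\setminus\{o\}$, which makes it a natural input for Lemma \ref{CY}.

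Next, I would nail down the location of $x\in\partial\Delta(r)$. From the lower estimate in (\ref{Gr}) together with the identity $G(o,x)=A\int_r^\infty t\,dt/V(t)$ that holds on $\partial\Delta(r)$, one obtains
$$A\int_{\rho(x)}^\infty\frac{t\,dt}{V(t)}\leq G(o,x)=A\int_r^\infty\frac{t\,dt}{V(t)},$$
which forces $\rho(x)\geq r$. This is the only geometric fact about the boundary that I will need.

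The core step is to pick an auxiliary center $y\in M$ with $d(x,y)=r/6$; such a $y$ exists by completeness (take the endpoint of any geodesic of length $r/6$ emanating from $x$). For any $z\in B(y,r/3)$ one has $d(x,z)<r/2$ and hence $\rho(z)\geq\rho(x)-d(x,z)>r-r/2=r/2>0$, so $B(y,r/3)\subseteq M\setminus\{o\}$. Since ${\rm{Ric}}\geq 0$ on $M$, Lemma \ref{CY} with $x_0=y$, radius $R=r/3$, evaluation point $x$, and $|\kappa(R)|=0$ yields
$$\frac{\|\nabla G(o,x)\|}{G(o,x)}\leq\frac{C_{2m}\,(r/3)^2}{(r/3)^2-(r/6)^2}\cdot\frac{1}{r/6}=\frac{c}{r}$$
for some constant $c$ depending only on $m$. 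Combining this with $G(o,x)=A\int_r^\infty t\,dt/V(t)$ on $\partial\Delta(r)$ produces the stated inequality with $c_1=cA$.

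The only nontrivial ingredient is verifying the geometric claim that a ball of radius comparable to $r$ centered at a point within distance $O(r)$ of $x$ avoids the pole $o$. This is handled above by the elementary triangle-inequality estimate together with the lower bound $\rho(x)\geq r$; once this is in place the proof is a direct application of Cheng-Yau, so I do not anticipate any serious obstacle beyond bookkeeping of constants.
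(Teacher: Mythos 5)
Your proof is correct and follows essentially the same route as the paper: both reduce the statement to the Cheng--Yau gradient estimate (Lemma \ref{CY}) applied to the positive harmonic function $G(o,\cdot)$ away from its pole, combined with the observation that $\rho(x)\ge r$ for $x\in\partial\Delta(r)$, which follows from the lower Li--Yau bound in (\ref{Gr}). If anything, your explicit choice of the auxiliary ball $B(y,r/3)$ avoiding $o$ makes rigorous the step the paper dispatches by ``letting $r\to\infty$'' in Lemma \ref{CY} together with a citation to Sasaki, and your use of the exact boundary value $G(o,x)=A\int_r^\infty t\,dt/V(t)$ on $\partial\Delta(r)$ is marginally cleaner than the paper's detour through the upper Li--Yau bound.
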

\begin{proof} By the curvature assumption,   ${\rm{Ric}}_M\geq0.$ Thus,  it yields   from Lemma \ref{CY} (letting $r\to\infty$) and (\ref{Gr})     that  (see Remark 5 in \cite{TS} also)
  \begin{eqnarray*}
\|\nabla G(o, x)\|\leq\frac{c_0}{\rho(x)}G(o,x) 
\leq \frac{c_0B}{\rho(x)}\int_{\rho(x)}^\infty\frac{tdt}{V(t)}
  \end{eqnarray*}
 for some large  constant $c_0>0$ which depends  only on  the dimension  $m.$  By  (\ref{Gr})   again 
 $$\int_{\rho(x)}^\infty\frac{tdt}{V(t)}\leq \int_r^\infty \frac{tdt}{V(t)}, \ \ \  \  x\in \partial\Delta(r),$$
 which  gives    
 $\rho(x)\geq r$ for  $x\in\partial\Delta(r).$
Set $c_1=c_0B.$  Then, we conclude that 
$$\|\nabla g_r(o, x)\|=\|\nabla G(o, x)\|\leq \frac{c_1}{r}\int_{r}^\infty\frac{tdt}{V(t)},  \ \ \ \      x\in\partial\Delta(r).$$
  \end{proof}
   
   By
   $$\frac{\partial g_r(o,x)}{\partial{\vec{\nu}}}=\|\nabla g_r(o, x)\|, \ \ \ \      x\in\partial\Delta(r),$$
we can derive    an  upper estimate of  $d\pi_r$  as follows. 
  \begin{cor}\label{bbb}  There exists a constant  $c_2>0$ independent of $r$  such that 
   $$d\pi_r\leq \frac{c_2}{r}\int_{r}^\infty\frac{tdt}{V(t)}d\sigma_r,$$
 where  $d\sigma_{r}$ is the induced Riemannian area element of 
$\partial \Delta(r).$
\end{cor}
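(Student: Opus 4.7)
The plan is to read off the corollary almost immediately from Theorem \ref{hh} together with the definition of the harmonic measure. Recall that $d\pi_r = \tfrac{1}{2}(\partial g_r(o,x)/\partial\vec\nu)\, d\sigma_r$, so the only thing that needs to be controlled pointwise on $\partial\Delta(r)$ is the inward normal derivative of the Dirichlet Green function.

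First I would justify the identity $\partial g_r(o,x)/\partial\vec\nu = \|\nabla g_r(o,x)\|$ on $\partial\Delta(r)$ that is already invoked in the text. Since $g_r(o,\cdot)$ vanishes on $\partial\Delta(r)$ by its Dirichlet boundary condition and is positive inside $\Delta(r)$, the level-set $\partial\Delta(r)$ (which is smooth for almost every $r$ by Sard's theorem) is perpendicular to $\nabla g_r(o,\cdot)$ at each point, and the gradient points inward. Therefore the inward normal derivative coincides with the full gradient norm.

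Second, I would insert the gradient bound from Theorem \ref{hh}: for $x\in\partial\Delta(r)$,
\begin{equation*}
\frac{\partial g_r(o,x)}{\partial\vec\nu} = \|\nabla g_r(o,x)\| \leq \frac{c_1}{r}\int_r^\infty \frac{t\,dt}{V(t)}.
\end{equation*}
Multiplying by $\tfrac{1}{2}\,d\sigma_r$ and setting $c_2 = c_1/2$ yields the stated inequality $d\pi_r \leq (c_2/r)\int_r^\infty (t/V(t))\,dt\; d\sigma_r$, with $c_2$ independent of $r$ since $c_1$ is.

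There is essentially no obstacle: the main content was already packed into Theorem \ref{hh} via Cheng--Yau's gradient estimate and the Li--Yau Green function bound. The only slightly subtle point is the smoothness of $\partial\Delta(r)$ required to speak of the normal derivative pointwise, which is handled by Sard's theorem for a.e.\ $r$; on the null set of exceptional radii the corollary is understood in the limiting sense, but since the applications integrate against $r$ this causes no difficulty.
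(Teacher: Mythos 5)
Your argument is correct and is essentially identical to the paper's: the corollary is obtained by combining the definition $d\pi_r=\tfrac{1}{2}(\partial g_r(o,x)/\partial\vec\nu)\,d\sigma_r$ with the identity $\partial g_r(o,x)/\partial\vec\nu=\|\nabla g_r(o,x)\|$ on $\partial\Delta(r)$ and the gradient bound of Theorem \ref{hh}. Your added remarks on why the inward normal derivative equals the gradient norm and on the a.e.\ smoothness of $\partial\Delta(r)$ are sound justifications of steps the paper leaves implicit.
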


 \section{Calculus Lemma and Logarithmic Derivative Lemma}
 
   Let $(M,g)$ be a non-parabolic complete  noncompact   K\"ahler manifold  with non-negative Ricci curvature, of complex dimension $m.$ 

 \subsection{Calculus Lemma}~
 
 We need the following Borel's growth  lemma (see  \cite{No, ru}). 
 \begin{lemma}[Borel's Growth Lemma]\label{} Let $u\geq0$ be a non-decreasing  function  on $(r_0, \infty)$ with $r_0\geq0.$ Then for any $\delta>0,$ there exists a subset $E_\delta\subseteq(r_0,\infty)$
 of finite Lebesgue measure such that  
 $$u'(r)\leq u(r)^{1+\delta}$$
 holds for all $r>r_0$ outside $E_{\delta}.$  
 \end{lemma}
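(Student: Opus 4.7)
The plan is to bound the Lebesgue measure of $E_\delta := \{r > r_0 : u'(r) > u(r)^{1+\delta}\}$ by estimating the integral $\int_{E_\delta} u'(r)/u(r)^{1+\delta}\,dr$ directly, using only the monotonicity of $u$.

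First, I would invoke Lebesgue's differentiation theorem for monotone functions: since $u$ is non-decreasing and non-negative, $u'(r)$ exists and is non-negative almost everywhere, and $\int_a^b u'(r)\,dr \leq u(b) - u(a)$ for all $r_0 < a < b$. On the (initial) interval where $u\equiv 0$, $u'(r) = 0$ a.e.\ and the target inequality $u'(r) \leq u(r)^{1+\delta}$ reads $0 \leq 0$, so this portion contributes nothing to $E_\delta$ up to a null set. If $u$ is identically zero the claim is immediate with $E_\delta = \emptyset$; otherwise fix $r_1 > r_0$ with $u(r_1) > 0$, noting that by monotonicity $u(r) \geq u(r_1) > 0$ for every $r \geq r_1$. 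Since $(r_0, r_1]$ is bounded, it suffices to show $|E_\delta \cap (r_1, \infty)| < \infty$.

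Next I would use the defining inequality to write
$$|E_\delta \cap (r_1, \infty)| \leq \int_{E_\delta \cap (r_1,\infty)} \frac{u'(r)}{u(r)^{1+\delta}}\,dr \leq \int_{r_1}^{\infty} \frac{u'(r)}{u(r)^{1+\delta}}\,dr,$$
and decompose the latter range dyadically: for each integer $k \geq k_0 := \lfloor \log_2 u(r_1)\rfloor$, set $I_k = \{r > r_1 : 2^k \leq u(r) < 2^{k+1}\}$. Monotonicity of $u$ implies each $I_k$ is a (possibly empty) interval, and the estimate $\int_{I_k} u'(r)\,dr \leq 2^{k+1} - 2^k = 2^k$ together with $u(r)^{1+\delta} \geq 2^{k(1+\delta)}$ on $I_k$ yields $\int_{I_k} u'(r)/u(r)^{1+\delta}\,dr \leq 2^{-k\delta}$. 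Summing produces a convergent geometric series
$$\int_{r_1}^{\infty} \frac{u'(r)}{u(r)^{1+\delta}}\,dr \leq \sum_{k \geq k_0} 2^{-k\delta} = \frac{2^{-k_0 \delta}}{1 - 2^{-\delta}} < \infty,$$
completing the argument.

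The only subtle point is that $u$ is only assumed non-decreasing rather than differentiable everywhere, but Lebesgue's theorem on monotone functions supplies both a.e.\ differentiability and the one-sided fundamental theorem $\int_a^b u'(r)\,dr \leq u(b) - u(a)$, which is precisely what the dyadic estimate requires; every other step is robust to modification on null sets. In particular, no absolute continuity is needed: potential singular or jump parts of $u$ only enlarge $u(b) - u(a)$ relative to $\int_a^b u'$, which makes the inequalities used above more favorable, not less.
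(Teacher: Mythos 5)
Your proof is correct, and it follows the same overall strategy as the paper: both arguments reduce to the Chebyshev-type bound $|E_\delta|\leq (r_1-r_0)+\int_{r_1}^\infty u'(r)u(r)^{-1-\delta}\,dr$ after discarding an initial bounded interval on which $u$ may vanish. The difference lies in how the finiteness of that integral is established. The paper evaluates it in closed form as $\frac{1}{\delta u(r_1)^\delta}-\frac{1}{\delta\eta^\delta}$ with $\eta=\lim_{r\to\infty}u(r)$, implicitly invoking the fundamental theorem of calculus for the antiderivative $-\frac{1}{\delta}u^{-\delta}$; for a merely non-decreasing $u$ this identity holds only as an inequality $\leq$ (monotone functions need not be absolutely continuous), though the inequality points in the needed direction, so the paper's bound survives. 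Your dyadic decomposition into the level sets $I_k=\{r>r_1:2^k\leq u(r)<2^{k+1}\}$, combined with the one-sided estimate $\int_{I_k}u'\leq 2^k$ from Lebesgue's theorem on monotone functions, reaches the same conclusion via the geometric series $\sum_{k\geq k_0}2^{-k\delta}$ while making the treatment of possible singular or jump parts of $u$ explicit. What you lose is the sharp constant $\frac{1}{\delta u(r_1)^\delta}$ (your bound is $\frac{2^{-k_0\delta}}{1-2^{-\delta}}$, which is comparable but not identical); what you gain is an argument that is transparently valid without any absolute-continuity caveat. Both yield the lemma as stated.
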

 \begin{proof} The conclusion is clearly true  for $u\equiv0.$ Next, we assume that $u\not\equiv0.$
 Since $u\geq0$ is  non-decreasing,  there is a number $r_1>r_0$ such that $u(r_1)>0.$  Using the non-decreasing property of $u,$   the  limit
 $\eta=\lim_{r\to\infty}u(r)$
exists or $\eta=\infty.$   If $\eta=\infty,$ then $\eta^{-1}=0.$ 
 Set  
 $$E_\delta=\left\{r\in(r_0,\infty): \  u'(r)>u(r)^{1+\delta}\right\}.$$
Note that  $u'(r)$ exists for  almost all  $r\in(r_0, \infty).$  Then, we have  
   \begin{eqnarray*}
 \int_{E_\delta}dr 
 &\leq& \int_{r_0}^{r_1}dr+\int_{r_1}^\infty\frac{u'(r)}{u(r)^{1+\delta}}dr \\
 &=&\frac{1}{\delta u(r_1)^\delta}-\frac{1}{\delta \eta^\delta}+r_1-r_0 \\
 &<&\infty.
    \end{eqnarray*}
 This completes the proof. 
 \end{proof}

We establish  the following  Calculus Lemma. 

 \begin{theorem}[Calculus Lemma]\label{calculus}
Let $k\geq0$ be a locally integrable function on $M.$ Assume that $k$ is locally bounded at $o.$ Then there is a  constant $C>0$ independent of $r$ such that
 for any $\delta>0,$ there exists  a subset $E_{\delta}\subseteq(0,\infty)$ of finite Lebesgue measure such that
$$\int_{\partial\Delta(r)}kd\pi_r\leq CH(r,\delta)\bigg(\int_{\Delta(r)}g_r(o,x)kdv\bigg)^{(1+\delta)^2}$$
holds for all $r>0$ outside $E_{\delta},$ where $H(r,\delta)$ is given by $(\ref{Hr}).$  
\end{theorem}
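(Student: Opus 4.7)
The plan follows the classical Nevanlinna-type Calculus Lemma scheme and combines three ingredients: a density bound on the harmonic measure $d\pi_r$, a coarea identity for $I(r):=\int_{\Delta(r)} k\,dv$, and two applications of Borel's growth lemma to produce the exponent $(1+\delta)^2$.

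First, I would dominate the harmonic-measure integral by a pure surface integral via Corollary \ref{bbb}: setting $S(r):=\int_{\partial\Delta(r)} k\,d\sigma_r$,
$$\int_{\partial\Delta(r)} k\,d\pi_r \leq \frac{c_2}{r}\int_r^\infty\frac{t\,dt}{V(t)}\cdot S(r).$$
To control $S(r)$, I would introduce the radial function $\phi:=u^{-1}\circ G$, where $u(r)=A\int_r^\infty\frac{t\,dt}{V(t)}$, so that $\partial\Delta(r)=\phi^{-1}(r)$ for every $r>0$. Since $\|\nabla\phi\|=\tfrac{V(\phi)}{A\phi}\|\nabla G\|$, the coarea formula applied with $\phi$ yields
$$I(r) = \int_0^r \frac{At}{V(t)}\int_{\partial\Delta(t)}\frac{k}{\|\nabla G\|}\,d\sigma_t\,dt, \qquad I'(r)=\frac{Ar}{V(r)}\int_{\partial\Delta(r)}\frac{k}{\|\nabla G\|}\,d\sigma_r.$$
Combining this identity with the pointwise upper bound $\|\nabla G\|\leq\tfrac{c_1}{r}\int_r^\infty\tfrac{t\,dt}{V(t)}$ on $\partial\Delta(r)$ from Theorem \ref{hh} converts the reciprocal gradient back into a manageable quantity and gives
$$S(r)\leq \frac{c_1 V(r)}{A r^2}\int_r^\infty\frac{t\,dt}{V(t)}\cdot I'(r).$$

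For the final step I would apply Borel's growth lemma twice. Applied to the non-decreasing function $I(r)$ it yields $I'(r)\leq I(r)^{1+\delta}$ outside a set $E_1$ of finite Lebesgue measure. Differentiating $T(r)=\int_{\Delta(r)}(G-u(r))k\,dv$ in $r$ and noting that the boundary contribution vanishes because $g_r\equiv 0$ on $\partial\Delta(r)$ gives the key identity $T'(r)=\tfrac{Ar}{V(r)}I(r)$. Applying Borel to the non-decreasing function $T(r)$ then produces $T'(r)\leq T(r)^{1+\delta}$ outside a set $E_2$ of finite measure, which rearranges to $I(r)\leq \tfrac{V(r)}{Ar}T(r)^{1+\delta}$. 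Chaining these on the complement of $E_\delta:=E_1\cup E_2$ yields
$$I'(r)\leq \left(\frac{V(r)}{Ar}\right)^{1+\delta}T(r)^{(1+\delta)^2}.$$
Substituting back and collecting the powers of $V(r)/r$ and of $\int_r^\infty\frac{t\,dt}{V(t)}$ gives an inequality of the form $\int_{\partial\Delta(r)} k\,d\pi_r\leq C\,H(r,\delta)\,T(r)^{(1+\delta)^2}$ for $r>0$ outside $E_\delta$, with $C>0$ independent of $r$.

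The main obstacle is the intricate bookkeeping of geometric factors $V(r)$, $r$, and $\int_r^\infty\frac{t\,dt}{V(t)}$ throughout the chain of inequalities: the gradient bound of Theorem \ref{hh} enters twice (once to dominate the density of $d\pi_r$, and once to bound $S(r)$ via $I'(r)$), and these two uses must combine exactly with the two Borel reductions so as to reproduce the explicit prefactor $H(r,\delta)=\tfrac{1}{r}(V(r)/r)^{1+\delta}\int_r^\infty\tfrac{t\,dt}{V(t)}$. The exponent $(1+\delta)^2$ is produced precisely by applying Borel's lemma twice, once to $I$ (giving $I'\leq I^{1+\delta}$) and once to $T$ (giving $I\leq\tfrac{V(r)}{Ar}T^{1+\delta}$ via the identity $T'(r)=\tfrac{Ar}{V(r)}I(r)$). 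A subsidiary but necessary verification is that $\partial\Delta(r)$ is smooth for almost every $r>0$, by Sard's theorem, so that all the surface-integral manipulations and coarea identities are meaningful outside a null set.
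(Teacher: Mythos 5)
Your overall architecture is the paper's: Corollary \ref{bbb} to pass from $d\pi_r$ to $d\sigma_r$, a slicing of the volume integral over the level sets $\partial\Delta(t)$, and two applications of Borel's lemma to produce the exponent $(1+\delta)^2$. The two Borel steps themselves are sound: your identity $T'(r)=\frac{Ar}{V(r)}I(r)$ is correct and mirrors the paper's $\Lambda'(r)=\frac{Ar}{V(r)}\int_0^r dt\int_{\partial\Delta(t)}k\,d\sigma_t$. The problem is the final bookkeeping, precisely the point you flag as ``the main obstacle'': it does not close. Writing $J(r)=\int_r^\infty\frac{t\,dt}{V(t)}$, your three displayed estimates chain to
$$\int_{\partial\Delta(r)}k\,d\pi_r\ \le\ \frac{c_2}{r}J(r)\cdot\frac{c_1V(r)}{Ar^2}J(r)\cdot\Big(\frac{V(r)}{Ar}\Big)^{1+\delta}T(r)^{(1+\delta)^2}\ =\ \frac{c_1c_2}{A^{2+\delta}}\,H(r,\delta)\cdot\frac{V(r)J(r)}{r^2}\cdot T(r)^{(1+\delta)^2},$$
so you are left with the extra factor $\frac{V(r)}{r^2}J(r)$, which is not bounded under the standing hypotheses: for $V(r)\asymp r^2\log^2 r$ (still non-parabolic, since $J(r)\asymp 1/\log r<\infty$) it grows like $\log r$. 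Indeed, condition (\ref{cond}) in Corollary \ref{dde} is designed exactly to accommodate slow growth of $\frac{V(r)}{r^2}J(r)$, so this factor cannot be absorbed into the constant $C$. The source of the loss is that the upper bound of Theorem \ref{hh} enters your argument twice multiplicatively: once inside Corollary \ref{bbb}, and once more to convert $\int_{\partial\Delta(r)}\frac{k}{\|\nabla G\|}d\sigma_r$ (which is what the honest coarea derivative $I'(r)$ controls) into $S(r)=\int_{\partial\Delta(r)}k\,d\sigma_r$. Each use costs the ratio between the pointwise upper bound $\frac{c_1}{r}J(r)$ and the ``mean'' gradient $\frac{Ar}{V(r)}$, and that ratio is exactly $\frac{c_1V(r)}{Ar^2}J(r)$.

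The paper never incurs the second loss because it slices the weighted integral directly as $\int_{\Delta(r)}g_r k\,dv=\int_0^r dt\int_{\partial\Delta(t)}g_r k\,d\sigma_t$, i.e.\ it takes $dv=dt\,d\sigma_t$ with no coarea Jacobian, so that $A\int_{\partial\Delta(r)}k\,d\sigma_r$ is literally $\frac{d}{dr}\big(\frac{V(r)\Lambda'(r)}{r}\big)$ and the two Borel applications give $S(r)\le\frac{1}{A}\big(\frac{V(r)}{r}\big)^{1+\delta}\Lambda(r)^{(1+\delta)^2}$ with no gradient factor at all; the single factor $\frac{1}{r}J(r)$ from Corollary \ref{bbb} then lands exactly on $H(r,\delta)$. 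Your more careful coarea formula is not compatible with that identification: to recover $S(r)$ from $I'(r)$ without loss you would need a matching \emph{lower} bound $\|\nabla G\|\ge c\,r/V(r)$ on $\partial\Delta(r)$, which Theorem \ref{hh} does not supply. So either establish such a lower gradient estimate, or restructure the argument as the paper does, so that the quantity to which Borel is applied already has $\int_{\partial\Delta(r)}k\,d\sigma_r$ as (a constant multiple of) its derivative. As written, your proof establishes the lemma only with $H(r,\delta)$ replaced by $H(r,\delta)\cdot\frac{V(r)}{r^2}\int_r^\infty\frac{t\,dt}{V(t)}$.
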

 
  \begin{proof} 
 Invoking  Lemma  \ref{thm4}, we have   
   \begin{eqnarray*}
 \int_{\Delta(r)}g_r(o,x)kdv 
 &=&\int_0^rdt \int_{\partial\Delta(t)}g_r(o,x)kd\sigma_t \\
    &=&  A\int_0^r\left(\int_{t}^r\frac{sds}{V(s)}\right)dt \int_{\partial\Delta(t)}kd\sigma_t.
   \end{eqnarray*}
 Set 
 $$\Lambda(r)=A\int_0^{r}\left(\int_{t}^{r}\frac{sds}{V(s)}\right)dt \int_{\partial\Delta(t)}kd\sigma_t.$$
 A simple computation leads to  
 $$\Lambda'(r)=\frac{d\Lambda(r)}{dr}=\frac{Ar}{V(r)}\int_0^rdt\int_{\partial\Delta(t)}kd\sigma_t.$$
 In further, we have
 $$\frac{d}{dr}\left(\frac{V(r)\Lambda'(r)}{r}\right)=A\int_{\partial\Delta(r)}kd\sigma_r.$$
Employing   Borel's growth lemma  to the left hand side of the above equality  twice:  one is to $V(r)\Lambda'(r)/r$ and  another   is to 
 $\Lambda'(r),$ then  we conclude   that for any  $\delta>0,$   there exists a subset $E_\delta\subseteq(0,\infty)$ of finite Lebesgue measure such that 
$$ \int_{\partial\Delta(r)}kd\sigma_r\leq \frac{1}{A}\left(\frac{V(r)}{r}\right)^{1+\delta}\Lambda(r)^{(1+\delta)^2}
$$ holds for all $r>0$ outside $E_\delta.$  
  On the other hand,  Corollary \ref{bbb} implies that there exists a constant $c>0$ independent of $r$ such that
  $$d\pi_r\leq \frac{c}{r}\int_{r}^\infty\frac{tdt}{V(t)}d\sigma_r.$$
Set $C=c/A.$ Combining the above, we have 
   \begin{eqnarray*}
 \int_{\partial\Delta(r)}kd\pi_r &\leq& \frac{c}{Ar}\left(\frac{V(r)}{r}\right)^{1+\delta}\int_{r}^\infty\frac{tdt}{V(t)}\Lambda(r)^{(1+\delta)^2}\\ 
 &=&CH(r,\delta)\Lambda(r)^{(1+\delta)^2}.
   \end{eqnarray*}
 holds for all $r>0$ outside $E_\delta.$  The proof is completed. 
 \end{proof}

 \vskip\baselineskip

  \subsection{Logarithmic Derivative  Lemma}~

Let $\psi$ be a meromorphic function on $M.$
The norm of the gradient $\nabla\psi$  is defined by
$$\|\nabla\psi\|^2=2\sum_{i,j=1}^m g^{i\overline j}\frac{\partial\psi}{\partial z_i}\overline{\frac{\partial \psi}{\partial  z_j}}$$
in a local holomorphic  coordinate $z=(z_1,\cdots,z_m),$ where $(g^{i\overline{j}})$ is the inverse of $(g_{i\overline{j}}).$
Define the Nevanlinna's characteristic function of $\psi$ by  
$$T(r,\psi)=m(r,\psi)+N(r,\psi),$$
where    
\begin{eqnarray*}
m(r,\psi)&=&\int_{\partial\Delta(r)}\log^+|\psi|d\pi_r, \\
N(r,\psi)&=& \frac{\pi^m}{(m-1)!}\int_{\psi^*\infty\cap \Delta(r)}g_r(o,x)\alpha^{m-1}.
 \end{eqnarray*}
For  any $a\in\mathbb C,$ it is not hard  to deduce   that 
 \begin{equation*}\label{goed}
T\Big(r,\frac{1}{\psi-a}\Big)= T(r,\psi)+O(1).
 \end{equation*}
 \ \ \ \  Put a singular metric on $\mathbb P^1(\mathbb C)$:  
$$\Psi=\frac{1}{|\zeta|^2(1+\log^2|\zeta|)}\frac{\sqrt{-1}}{4\pi^2}d\zeta\wedge d\bar \zeta, \ \ \ \      \int_{\mathbb P^1(\mathbb C)}\Psi=1.$$
\begin{lemma}\label{oo12} We have
$$\frac{1}{4\pi}\int_{\Delta(r)}g_r(o,x)\frac{\|\nabla\psi\|^2}{|\psi|^2(1+\log^2|\psi|)}dv\leq T(r,\psi)+O(1).$$
\end{lemma}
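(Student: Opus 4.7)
The plan is to recognize the left-hand side as proportional to a Nevanlinna-type integral of $\psi$ against the singular $(1,1)$-form $\Psi$, and then to reduce this quantity to the standard Fubini--Study characteristic $T(r,\psi)$ via a potential-theoretic computation. First, using the standard K\"ahler identity relating a $(1,1)$-form wedged with $\alpha^{m-1}$ to its $\alpha$-trace, together with $\|\nabla\psi\|^2=2g^{i\bar j}\partial_i\psi\overline{\partial_j\psi}$ and the volume normalization $\alpha^m=(m!/\pi^m)\,dv$ implicit in Section \ref{sec32}, I would verify that the integrand satisfies
\[
\frac{\|\nabla\psi\|^2}{4\pi|\psi|^2(1+\log^2|\psi|)}\,dv \;=\; C\,\psi^*\Psi\wedge\alpha^{m-1}
\]
for an explicit universal constant $C$. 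The left-hand side of the lemma is thereby identified with a constant multiple of $\frac{\pi^m}{(m-1)!}\int_{\Delta(r)}g_r(o,x)\,\psi^*\Psi\wedge\alpha^{m-1}$.

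Next, I would construct a potential for $\Psi$ on $\mathbb{P}^1(\mathbb{C})\setminus\{0,\infty\}$. The rotationally invariant ansatz $\phi(\zeta)=F(\log|\zeta|)$ reduces $dd^c\phi=\Psi$ on $\mathbb{C}^*$ to the ODE $F''(u)=2/(\pi(1+u^2))$, with the closed-form solution $F(u)=\tfrac{2}{\pi}u\arctan u-\tfrac{1}{\pi}\log(1+u^2)$. One checks that $\phi\geq 0$, with logarithmic behavior $\phi\sim|\log|\zeta||$ at both $0$ and $\infty$ and the global pointwise bound $\phi(\zeta)\leq\log^+|\zeta|+\log^+(1/|\zeta|)$. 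Poincar\'e--Lelong then identifies, as currents on $\mathbb{P}^1(\mathbb{C})$,
\[
\Psi \;=\; dd^c\phi + \tfrac{1}{2}[0] + \tfrac{1}{2}[\infty],
\]
the two Dirac halves restoring the mass balance with $\int_{\mathbb{P}^1(\mathbb{C})}\Psi=1$. Pulling back by $\psi$ yields $\psi^*\Psi=dd^c(\phi\circ\psi)+\tfrac{1}{2}\psi^*\{0\}+\tfrac{1}{2}\psi^*\{\infty\}$ on $M$. Integrating against $\tfrac{\pi^m}{(m-1)!}g_r\wedge\alpha^{m-1}$, the two divisor terms produce $\tfrac{1}{2}N(r,1/\psi)+\tfrac{1}{2}N(r,\psi)$, while the $dd^c(\phi\circ\psi)$ piece, after translation via the identity $dd^cF\wedge\alpha^{m-1}=\tfrac{(m-1)!}{4\pi^m}\Delta F\,dv$ and application of Green--Dynkin (Lemma \ref{dynkin})---valid distributionally since the singular set $\psi^{-1}\{0,\infty\}$ is polar---becomes $\tfrac{1}{2}\bigl(\int_{\partial\Delta(r)}\phi(\psi)\,d\pi_r-\phi(\psi(o))\bigr)$. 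The pointwise bound on $\phi$ converts the boundary integral into $m(r,\psi)+m(r,1/\psi)+O(1)$, and combining with the First Main Theorem identity $T(r,1/\psi)=T(r,\psi)+O(1)$ yields the asserted inequality.

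The main technical obstacle will be carrying out the Poincar\'e--Lelong step correctly: one must verify rigorously that the $\log$-singularities of $\phi$ at $0$ and $\infty$ produce exactly the Dirac contributions $\tfrac{1}{2}[0]+\tfrac{1}{2}[\infty]$ (a necessary balance, since any classical $dd^c$ has vanishing total mass while $\int_{\mathbb{P}^1(\mathbb{C})}\Psi=1$), and then propagate this decomposition through the pullback by $\psi$ and the distributional Green--Dynkin formula so that the proximity contribution on the boundary is estimated sharply enough to combine with the counting contributions into $T(r,\psi)+O(1)$.
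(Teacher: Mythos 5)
Your first step coincides with the paper's: both identify $\frac{1}{4\pi}\frac{\|\nabla\psi\|^2}{|\psi|^2(1+\log^2|\psi|)}\,dv$ with $\frac{\pi^m}{(m-1)!}\,g$-free density $\psi^*\Psi\wedge\alpha^{m-1}$; note that your unspecified constant must come out to be exactly $C=1$ for the stated bound, and it does, since the paper's normalization gives $\frac{\|\nabla\psi\|^2}{|\psi|^2(1+\log^2|\psi|)}=4m\pi\,\psi^*\Psi\wedge\alpha^{m-1}/\alpha^m$ and $dv=\frac{\pi^m}{m!}\alpha^m$. After that the two arguments genuinely diverge. The paper applies Fubini's theorem (a Crofton-type formula) to rewrite the weighted integral as $\int_{\mathbb P^1(\mathbb C)}N\bigl(r,\frac{1}{\psi-\zeta}\bigr)\Psi(\zeta)$ and bounds each counting function by $T(r,\psi)+O(1)$ using the First Main Theorem and the mass normalization $\int_{\mathbb P^1(\mathbb C)}\Psi=1$. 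You instead build an explicit radial potential: your ODE $F''(u)=\frac{2}{\pi(1+u^2)}$ is correct under the paper's convention $dd^c=\frac{\sqrt{-1}}{2\pi}\partial\overline\partial$, the solution $F(u)=\frac{2}{\pi}u\arctan u-\frac{1}{\pi}\log(1+u^2)$ is convex, nonnegative, satisfies $F(u)\leq|u|$ and $F(u)=|u|+O(\log|u|)$ at $\pm\infty$, so the current identity $\Psi=dd^c\phi+\frac12[0]+\frac12[\infty]$ holds with the correct mass balance; Green--Dynkin then yields the bound $\frac12 T(r,\psi)+\frac12 T(r,1/\psi)+O(1)=T(r,\psi)+O(1)$. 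Each route has a trade-off: the paper's Fubini argument is shorter but its $O(1)$ is really an average over $\zeta$ of constants of the form $\log\|s_\zeta(\psi(o))\|^{-1}$, which are not uniformly bounded and whose $\Psi$-integrability (valid when $\psi(o)\notin\{0,\infty\}$, since logarithmic singularities are locally $\Psi$-integrable away from $0$ and $\infty$) deserves a word; your potential-theoretic route makes the error term completely explicit as $\frac12\phi(\psi(o))$ plus absolute constants and invokes the First Main Theorem only at the two points $0$ and $\infty$, at the cost of the Poincar\'e--Lelong bookkeeping. Both arguments share the same standard normalization caveat that $\psi(o)\notin\{0,\infty\}$ (otherwise replace $\psi$ by a M\"obius translate). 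Your proof is correct.
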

\begin{proof}  A direct  computation   gives   
$$\frac{\|\nabla\psi\|^2}{|\psi|^2(1+\log^2|\psi|)}=4m\pi\frac{\psi^*\Psi\wedge\alpha^{m-1}}{\alpha^m}.$$
Whence,  we conclude  from 
 Fubini's theorem that 
\begin{eqnarray*}
&& \frac{1}{4\pi}\int_{\Delta(r)}g_r(o,x)\frac{\|\nabla\psi\|^2}{|\psi|^2(1+\log^2|\psi|)}dv \\ 
&=&m\int_{\Delta(r)}g_r(o,x)\frac{\psi^*\Psi\wedge\alpha^{m-1}}{\alpha^m}dv  \\
&=&\frac{\pi^m}{(m-1)!}\int_{\mathbb P^1(\mathbb C)}\Psi(\zeta)\int_{\psi^*\zeta\cap \Delta(r)}g_r(o,x)\alpha^{m-1} \\
&=&\int_{\mathbb P^1(\mathbb C)}N\Big(r, \frac{1}{\psi-\zeta}\Big)\Psi(\zeta)  \\
&\leq&\int_{\mathbb P^1(\mathbb C)}\big{(}T(r,\psi)+O(1)\big{)}\Psi \\
&=& T(r,\psi)+O(1). 
\end{eqnarray*}
\end{proof}

\begin{lemma}\label{999a}  Let
$\psi\not\equiv0$ be a  meromorphic function on  $M.$  Then for any $\delta>0,$ there exists a subset 
 $E_\delta\subseteq(0,\infty)$ of finite Lebesgue measure such that
  \begin{eqnarray*}
 &&\int_{\partial\Delta(r)}\log^+\frac{\|\nabla\psi\|^2}{|\psi|^2(1+\log^2|\psi|)}d\pi_r \\
  &\leq& (1+\delta)^2\log^+ T(r,\psi)+\log H(r,\delta)+O(1)
\end{eqnarray*}
 holds for all $r>0$ outside  $E_\delta,$   where $H(r,\delta)$ is given by $(\ref{Hr}).$   
\end{lemma}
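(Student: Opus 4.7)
The plan is to combine three ingredients already established in the paper: the fact that the harmonic measure $\pi_r$ is a probability measure on $\partial\Delta(r)$, the Calculus Lemma (Theorem \ref{calculus}), and the ``proximity-to-characteristic'' bound of Lemma \ref{oo12}. Set
$$F = \frac{\|\nabla\psi\|^2}{|\psi|^2(1+\log^2|\psi|)},$$
so that the left-hand side of the asserted inequality is $\int_{\partial\Delta(r)} \log^+ F \, d\pi_r$.

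First I would note that applying Green--Dynkin (Lemma \ref{dynkin}) with $\phi \equiv 1$ yields $\int_{\partial\Delta(r)} d\pi_r = 1$, so $\pi_r$ is a probability measure on $\partial\Delta(r)$. Combining the elementary inequality $\log^+ t \leq \log(1+t)$ with Jensen's inequality for the concave function $\log(1+\cdot)$ then gives
$$\int_{\partial\Delta(r)}\log^+ F \, d\pi_r \;\leq\; \log\!\left(1 + \int_{\partial\Delta(r)} F \, d\pi_r\right).$$

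Next I would invoke the Calculus Lemma with $k = F$, which produces a constant $C > 0$ (independent of $r$) and an exceptional set $E_\delta \subseteq (0,\infty)$ of finite Lebesgue measure such that
$$\int_{\partial\Delta(r)} F \, d\pi_r \;\leq\; C\, H(r,\delta)\left(\int_{\Delta(r)} g_r(o,x)\, F \, dv\right)^{(1+\delta)^2}$$
for all $r \notin E_\delta$. The inner integral is controlled directly by Lemma \ref{oo12}:
$$\int_{\Delta(r)} g_r(o,x)\, F\, dv \;\leq\; 4\pi\bigl(T(r,\psi) + O(1)\bigr).$$
Substituting this into the previous chain and taking a single logarithm, one uses the elementary bound $\log\bigl(T(r,\psi) + O(1)\bigr) \leq \log^+ T(r,\psi) + O(1)$ and absorbs the constants into the $\log H(r,\delta)$ term to reach exactly the stated estimate.

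The main technical obstacle is the hypothesis in the Calculus Lemma that $k$ be locally bounded at $o$. The function $F$ has an integrable singularity near zeros and poles of $\psi$ (the $|\psi|^{-2}$ blow-up is tempered by the $(1+\log^2|\psi|)^{-1}$ factor), but it is not bounded at such points. The standard workaround is to assume without loss of generality that $\psi(o) \neq 0,\infty$; the general case then follows either by a slight shift of the reference point (which alters both sides by $O(1)$ for large $r$) or by applying the argument to $(\psi - a)$ after an approximation, since $\nabla(\psi-a) = \nabla\psi$ and $T(r,\psi-a) = T(r,\psi) + O(1)$. Under this normalization, $F$ is smooth in a neighborhood of $o$, the Calculus Lemma applies directly, and the remainder of the proof proceeds as above.
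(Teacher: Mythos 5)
Your proposal is correct and follows essentially the same route as the paper's proof: concavity of $\log$ (Jensen) to pull the logarithm outside the boundary integral, then the Calculus Lemma (Theorem \ref{calculus}) applied to $k=\|\nabla\psi\|^2/\bigl(|\psi|^2(1+\log^2|\psi|)\bigr)$, then Lemma \ref{oo12} to bound the resulting interior integral by $T(r,\psi)+O(1)$. Your extra attention to the local-boundedness hypothesis at $o$ in the Calculus Lemma is a legitimate technical point that the paper passes over in silence, and your normalization $\psi(o)\neq 0,\infty$ disposes of it.
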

\begin{proof} Since  $\pi_r$  is a harmonic measure on $\partial\Delta(r),$ 
it  is   a probability measure on  $\partial\Delta(r).$  Using the concavity of $``\log",$ we deduce that       
\begin{eqnarray*}
&& \int_{\partial\Delta(r)}\log^+\frac{\|\nabla\psi\|^2}{|\psi|^2(1+\log^2|\psi|)}d\pi_r \\
   &\leq&   \log\int_{\partial\Delta(r)}\bigg(1+\frac{\|\nabla\psi\|^2}{|\psi|^2(1+\log^2|\psi|)}\bigg)d\pi_r \\
    &\leq&  \log^+\int_{\partial\Delta(r)}\frac{\|\nabla\psi\|^2}{|\psi|^2(1+\log^2|\psi|)}d\pi_r+O(1). \nonumber
\end{eqnarray*}
By  this with  Theorem \ref{calculus} and Lemma \ref{oo12}, we conclude  that  for any $\delta>0,$ there exists a subset 
 $E_\delta\subseteq(0,\infty)$ of finite Lebesgue measure such that 
\begin{eqnarray*}
   && \log^+\int_{\partial\Delta(r)}\frac{\|\nabla\psi\|^2}{|\psi|^2(1+\log^2|\psi|)}d\pi_r\\
   &\leq& (1+\delta)^2 \log^+\int_{\Delta(r)}g_r(o,x)\frac{\|\nabla\psi\|^2}{|\psi|^2(1+\log^2|\psi|)}dv+\log H(r,\delta)+O(1) \\
   &\leq& (1+\delta)^2 \log^+T(r,\psi)+\log H(r,\delta)+O(1)
\end{eqnarray*}
holds  for all $r>0$ outside  $E_\delta.$  
\end{proof}
Define
$$m\left(r,\frac{\|\nabla\psi\|}{|\psi|}\right)=\int_{\partial\Delta(r)}\log^+\frac{\|\nabla\psi\|}{|\psi|}d\pi_r.$$
\ \ \ \  With  the above  preparations,  we are  to  prove the following  Logarithmic Derivative  Lemma. 
\begin{theorem}[Logarithmic Derivative  Lemma]\label{log1} Let
$\psi\not\equiv0$ be a  meromorphic function on  $M.$   Then for any $\delta>0,$ there exists a  subset  $E_\delta\subseteq(0,\infty)$ of  finite Lebesgue measure such that 
\begin{eqnarray*}
   m\Big(r,\frac{\|\nabla\psi\|}{|\psi|}\Big)&\leq& \frac{2+(1+\delta)^2}{2}\log^+ T(r,\psi)+\frac{1}{2}\log H(r,\delta)
\end{eqnarray*}
 holds for all $r>0$ outside  $E_\delta,$  where $H(r,\delta)$ is given by $(\ref{Hr}).$  
\end{theorem}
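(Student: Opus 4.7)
The plan is to reduce the statement to Lemma \ref{999a} via the classical Ahlfors multiplication trick: insert the logarithmic weight $1+\log^2|\psi|$ into the integrand so that Lemma \ref{999a} becomes directly applicable. Specifically, I would write
$$\log^+\frac{\|\nabla\psi\|}{|\psi|}=\frac{1}{2}\log^+\frac{\|\nabla\psi\|^2}{|\psi|^2}\leq \frac{1}{2}\log^+\frac{\|\nabla\psi\|^2}{|\psi|^2(1+\log^2|\psi|)}+\frac{1}{2}\log^+\bigl(1+\log^2|\psi|\bigr)+O(1),$$
and then integrate both sides against the harmonic measure $d\pi_r$ on $\partial\Delta(r)$. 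This splits the proximity $m(r,\|\nabla\psi\|/|\psi|)$ into two terms which will be controlled independently.

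For the first term, I apply Lemma \ref{999a} verbatim. Outside a set $E_\delta\subseteq(0,\infty)$ of finite Lebesgue measure, the integral of $\log^+\{\|\nabla\psi\|^2/|\psi|^2(1+\log^2|\psi|)\}$ against $d\pi_r$ is bounded by $(1+\delta)^2\log^+T(r,\psi)+\log H(r,\delta)+O(1)$. Dividing by $2$ produces the dominant coefficient $\frac{(1+\delta)^2}{2}\log^+T(r,\psi)$ and the full factor $\frac{1}{2}\log H(r,\delta)$ appearing in the target inequality.

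For the second term, I use $\log^+(1+\log^2|\psi|)\leq 2\log^+\bigl|\log|\psi|\bigr|+O(1)$ together with the decomposition $\bigl|\log|\psi|\bigr|=\log^+|\psi|+\log^+(1/|\psi|)$. Since $d\pi_r$ is a probability measure on $\partial\Delta(r)$ (by Green--Dynkin applied to the constant function $1$, via Lemma \ref{dynkin}), Jensen's inequality and the concavity of $\log^+$ give
$$\int_{\partial\Delta(r)}\log^+\bigl|\log|\psi|\bigr|\,d\pi_r\leq \log^+\bigl(m(r,\psi)+m(r,1/\psi)\bigr)+O(1).$$
Now $m(r,\psi)\leq T(r,\psi)$ by definition, and the First Main Theorem applied to $1/\psi$ together with the identity $T(r,1/\psi)=T(r,\psi)+O(1)$ already recorded in the paper gives $m(r,1/\psi)\leq T(r,\psi)+O(1)$. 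Thus the second piece contributes at most $\log^+T(r,\psi)+O(1)$, and summing yields the coefficient $\tfrac{(1+\delta)^2+2}{2}$ stated in the theorem.

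I do not anticipate a substantial obstacle: all of the analytic weight has already been absorbed into Lemmas \ref{oo12} and \ref{999a}, whose proofs in turn rest on the Calculus Lemma (Theorem \ref{calculus}) and the gradient estimate for $g_r(o,x)$ (Theorem \ref{hh}). The only point requiring mild care is honest bookkeeping of the exceptional set $E_\delta$ arising from Lemma \ref{999a} and verifying that absorbing additive $O(1)$ terms into $\frac{1}{2}\log H(r,\delta)$ is legitimate; this is automatic since $H(r,\delta)\to\infty$ under the non-parabolicity of $M$.
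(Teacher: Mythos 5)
Your proposal is correct and follows essentially the same route as the paper's proof: the same insertion of the weight $1+\log^2|\psi|$ to split $m\big(r,\|\nabla\psi\|/|\psi|\big)$ into a term handled verbatim by Lemma \ref{999a} and a term $\tfrac12\int_{\partial\Delta(r)}\log\big(1+\log^2|\psi|\big)\,d\pi_r$ controlled by concavity, the First Main Theorem and $T(r,1/\psi)=T(r,\psi)+O(1)$, yielding the same coefficients $\tfrac{(1+\delta)^2}{2}$ and $1$ on $\log^+T(r,\psi)$. The only cosmetic differences are that the paper bounds the weight term via $\log\big(1+(\log^+|\psi|+\log^+\tfrac{1}{|\psi|})^2\big)$ directly rather than passing through $\log^+\big|\log|\psi|\big|$, and that it, like you, absorbs the residual $O(1)$ without comment.
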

\begin{proof}  The concavity of $``\log"$ gives  that 
\begin{eqnarray*}
    m\left(r,\frac{\|\nabla\psi\|}{|\psi|}\right)  
   &\leq& \frac{1}{2}\int_{\partial\Delta(r)}\log^+\frac{\|\nabla\psi\|^2}{|\psi|^2(1+\log^2|\psi|)}d\pi_r \ \ \  \  \    \   \  \  \    \    \   \   \\ 
 &&   +\frac{1}{2}\int_{\partial\Delta(r)}\log\left(1+\log^2|\psi|\right)d\pi_r \\
  &=& \frac{1}{2}\int_{\partial\Delta(r)}\log^+\frac{\|\nabla\psi\|^2}{|\psi|^2(1+\log^2|\psi|)}d\pi_r \\
   && +\frac{1}{2}\int_{\partial\Delta(r)}\log\bigg(1+\Big{(}\log^+|\psi|+\log^+\frac{1}{|\psi|}\Big{)}^2\bigg)d\pi_r  \\
    &\leq&  \frac{1}{2}\int_{\partial\Delta(r)}\log^+\frac{\|\nabla\psi\|^2}{|\psi|^2(1+\log^2|\psi|)}d\pi_r \\
   && +\log\int_{\partial\Delta(r)}\Big{(}\log^+|\psi|+\log^+\frac{1}{|\psi|}\Big{)}d\pi_r +O(1)  \\
   &\leq& \frac{1}{2}\int_{\partial\Delta(r)}\log^+\frac{\|\nabla\psi\|^2}{|\psi|^2(1+\log^2|\psi|)}d\pi_r+\log^+T(r,\psi)+O(1).
\end{eqnarray*}
By Lemma \ref{999a},  we have for any $\delta>0,$ there exists a subset 
 $E_\delta\subseteq(0,\infty)$ of finite Lebesgue measure such that 
 \begin{equation*}
 m\left(r,\frac{\|\nabla\psi\|}{|\psi|}\right)  \leq \frac{2+(1+\delta)^2}{2}\log^+ T(r,\psi)+\frac{\delta}{2}\log H(r,\delta)
\end{equation*}
 holds for all $r>0$ outside  $E_\delta,$  where $H(r,\delta)$ is given by $(\ref{Hr}).$   
  \end{proof}

  \section{Second Main Theorem and Defect Relation}

   Let $(M, g)$ be a non-parabolic  complete noncompact  K\"ahler manifold  with non-negative Ricci curvature, of complex dimension $m.$
Let $X$ be a complex projective manifold of complex dimension not greater than $m.$ Over $X,$ one can put a positive holomorphic line bundle $(L, h).$ Let $D\in|L|$ be a reduced divisor of simple normal crossing type.  
      Recall that  
$$T(r,\mathscr R)= \frac{\pi^m}{(m-1)!}\int_{\Delta(r)}g_r(o,x)\mathscr R\wedge\alpha^{m-1},$$
where $\mathscr R=-dd^c\log\det(g_{i\bar j})$ is the Chern-Ricci form of $g$ on $M.$  

Let $Z=\sum_j\mu_jZ_j$ be a divisor, where  $Z_j^,s$ are prime divisors.  The reduced form of $Z$ is
 defined by ${\rm{Red}}(Z):=\sum_jZ_j.$

Write $D=D_1+\cdots+D_q$ as  the irreducible decomposition of $D.$
Equiping every holomorphic line bundle $\mathscr O(D_j)$ with a Hermitian
metric $h_j$ such that it  induces  the  Hermitian metric $h=h_1\otimes\cdots\otimes h_q$ on $L.$ 
Pick $s_j\in H^0(X, \mathscr O(D_j)$
such that  $(s_j)=D_j$ with $\|s_j\|<1.$
On $X,$ we define a singular volume form
$$ \Phi=\frac{\Omega}{\prod_{j=1}^q\|s_j\|^2}, \ \ \ \    \Omega=\wedge^nc_1(L,h).$$ 
\ \ \ \   Let $f: M\to X$ be a differentiably non-degenerate meromorphic mapping.
Set
$$f^*\Phi\wedge\alpha^{m-n}=\xi\alpha^m.$$
It is clear that
$$\alpha^m=m!\det(g_{i\bar j})\bigwedge_{j=1}^m\frac{\sqrt{-1}}{\pi}dz_j\wedge d\bar z_j.$$
 In further, we have   
$$dd^c[\log\xi]\geq f^*c_1(L, h_L)-f^*{\rm{Ric}}(\Omega)+\mathscr{R}-[{\rm{Red}}(f^*D)]$$
in the sense of currents.    
Thus, it  yields   that
\begin{eqnarray}\label{5q}
&& \frac{1}{4}\int_{\Delta(r)}g_r(o,x)\Delta\log\xi dv \\
&\geq& T_{f}(r,L)+T_{f}(r,K_X)+T(r,\mathscr{R})-\overline{N}_{f}(r,D).  \nonumber
\end{eqnarray}

We give  the main theorem  in this paper, i.e.,   the  Second Main Theorem as follows. 

\begin{theorem}[Second Main Theorem]\label{main}   Let $M$ be a non-parabolic complete noncompact    K\"ahler manifold with non-negative Ricci curvature. 
 Let $X$ be a complex projective manifold of complex dimension not greater than that  of $M.$
 Let $D\in|L|$ be a reduced divisor of simple normal crossing type,  where $L$ is a positive line bundle over $X.$ Let $f:M\rightarrow X$ be a differentiably non-degenerate meromorphic mapping. Then  for any $\delta>0,$ there exists a subset $E_\delta\subseteq(0, \infty)$ of finite Lebesgue measure such that 
$$T_f(r,L)+T_f(r, K_X)+T(r, \mathscr R)\leq \overline N_f(r,D)+O\left(\log^+T_f(r,L)+ \log H(r,\delta)\right)$$
holds for all $r>0$ outside $E_\delta,$  where $H(r,\delta)$ is given by $(\ref{Hr}).$   
\end{theorem}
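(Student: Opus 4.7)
The plan is to start from the curvature-theoretic inequality (\ref{5q}), which already reads
\begin{equation*}
T_f(r,L)+T_f(r,K_X)+T(r,\mathscr R)-\overline N_f(r,D)\leq \tfrac{1}{4}\int_{\Delta(r)}g_r(o,x)\Delta\log\xi\,dv,
\end{equation*}
so that the whole task reduces to bounding the right-hand side by $O(\log^+T_f(r,L)+\log H(r,\delta))$ outside a set of finite measure. My first move is to apply the Green-Dynkin formula (Lemma \ref{dynkin}) to $\log\xi$. After shifting $o$ if necessary so that $f(o)$ lies off $\mathrm{Supp}(D)$ and off the critical and indeterminacy loci, $\log\xi$ is $\mathscr C^2$ away from a polar set with $\xi(o)\neq 0,\infty$, so Green-Dynkin converts the interior integral into a boundary one:
\begin{equation*}
\tfrac{1}{4}\int_{\Delta(r)}g_r(o,x)\Delta\log\xi\,dv=\tfrac{1}{2}\int_{\partial\Delta(r)}\log\xi\,d\pi_r-\tfrac{1}{2}\log\xi(o).
\end{equation*}
Using $\log\xi\leq\log^+\xi$, the problem is now to estimate $\int_{\partial\Delta(r)}\log^+\xi\,d\pi_r$.

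The second step is the Carlson-Griffiths type pointwise bound on $\xi$. I would fix a finite covering $\{U_\alpha\}$ of $X$ with local holomorphic coordinates $(w_1^\alpha,\ldots,w_n^\alpha)$ adapted to the simple-normal-crossing divisor $D=D_1+\cdots+D_q$ (so each local component of $D$ is cut out by a coordinate $w_j^\alpha$), together with a subordinate partition of unity and local trivializations of $(L,h)$. In each chart, $\Phi=\Omega/\prod\|s_j\|^2$ equals a smooth bounded positive factor times $\bigwedge_j (dw_j^\alpha\wedge d\bar w_j^\alpha)/|w_j^\alpha|^2$. Pulling back by $f$, pairing with $\alpha^{m-n}$ (this uses $n=\dim_\CC X\leq m$), and applying a Hadamard-type inequality to the resulting $n\times n$ minor of the Jacobian gives a pointwise estimate
\begin{equation*}
\xi\leq C\sum_{\alpha,j}\frac{\|\nabla(w_j^\alpha\circ f)\|^{2}}{|w_j^\alpha\circ f|^{2}}+C,
\end{equation*}
where the extra smooth contributions from the partition of unity and from the comparison between $c_1(L,h)$ and $\alpha$ are absorbed into bounded terms.

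The third step invokes the Logarithmic Derivative Lemma. Taking $\log^+$ of the pointwise bound, using the concavity of $\log$ and the fact that $\pi_r$ is a probability measure on $\partial\Delta(r)$, I obtain
\begin{equation*}
\int_{\partial\Delta(r)}\log^+\xi\,d\pi_r\leq 2\sum_{\alpha,j}m\!\left(r,\frac{\|\nabla(w_j^\alpha\circ f)\|}{|w_j^\alpha\circ f|}\right)+O(1).
\end{equation*}
Theorem \ref{log1} bounds each summand by $\tfrac{2+(1+\delta)^2}{2}\log^+T(r,w_j^\alpha\circ f)+\tfrac{1}{2}\log H(r,\delta)$ outside an exceptional set of finite Lebesgue measure. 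Since each $w_j^\alpha$ is a meromorphic function on $X$ whose pole divisor is dominated by a fixed positive multiple of $L$, the First Main Theorem of §3.3 together with the functoriality of characteristic functions under a morphism of projective varieties yields $T(r,w_j^\alpha\circ f)\leq C'T_f(r,L)+O(1)$. Summing over the finitely many charts and indices and plugging back into the Green-Dynkin identity gives the stated bound.

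The main obstacle is the second step: extracting a clean pointwise majorant of $\xi$ as a sum of squared logarithmic gradients of scalar meromorphic functions. It requires coordinates on $X$ adapted to the SNC structure of $D$, a careful use of $\alpha^{m-n}$ to supply the missing $m-n$ directions in the wedge product, and the linear-algebraic (Hadamard/AM-GM) inequality that turns an $n\times n$ Jacobian determinant into a trace-type sum; the remaining steps are essentially mechanical combinations of the Green-Dynkin formula, the Calculus Lemma, and the Logarithmic Derivative Lemma already established.
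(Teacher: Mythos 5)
Your overall architecture coincides with the paper's: start from (\ref{5q}), convert the interior integral of $\Delta\log\xi$ to a boundary integral via Green--Dynkin, bound $\log^+\xi$ locally using SNC-adapted coordinates and a partition of unity, and finish with the Logarithmic Derivative Lemma. However, your Step 2 contains a genuine flaw. In each chart the density $\xi_\lambda$ is (a sum over choices of $n$ directions of) a \emph{product} of $n$ factors: $h_\lambda$ factors of logarithmic-derivative type $\|\nabla f_{\lambda k}\|^2/|f_{\lambda k}|^2$ coming from the coordinates that cut out $D$, and $n-h_\lambda$ factors that are plain squared gradients $\|\nabla f_{\lambda k}\|^2$ with \emph{no} denominator. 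Your claimed pointwise bound
$\xi\leq C\sum_{\alpha,j}\|\nabla(w_j^\alpha\circ f)\|^{2}/|w_j^\alpha\circ f|^{2}+C$
is false as stated (a product of large factors is not dominated by their sum), and, more importantly, the plain gradient factors are \emph{not} ``smooth contributions absorbed into bounded terms'': they are unbounded derivative terms of $f$. The paper handles them by introducing the density $\varrho$ defined by $f^*c_1(L,h)\wedge\alpha^{m-1}=\varrho\,\alpha^m$, bounding $\|\nabla f_{\lambda k}\|^2\leq 2\varrho/(m-1)!$, and then controlling $\log^+\int_{\partial\Delta(r)}\varrho\,d\pi_r$ by $(1+\delta)^2\log^+T_f(r,L)+\log H(r,\delta)$ via the Calculus Lemma (Theorem \ref{calculus}); this is one of the two places where the $\log H(r,\delta)$ term actually enters the Second Main Theorem, and it is absent from your outline.

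The gap is repairable in two ways: either adopt the paper's $\varrho$ device, or observe that each $w_j^\alpha$ is bounded on the (precompact) chart $U_\alpha$, so on $f^{-1}(U_\alpha)$ one has $\|\nabla(w_j^\alpha\circ f)\|^2\leq \big(\sup_{U_\alpha}|w_j^\alpha|^2\big)\,\|\nabla(w_j^\alpha\circ f)\|^2/|w_j^\alpha\circ f|^2$, which converts the plain gradients into logarithmic derivatives and lets the LDL absorb everything. Either fix restores the argument; the remaining steps (the product becoming a sum after $\log^+$, the estimate $T(r,w_j^\alpha\circ f)\leq C'T_f(r,L)+O(1)$ from positivity of $L$, and the final assembly) are correct and match the paper.
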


\begin{proof}   
  Since $D$  has the     simple normal crossing type,   there exist 
  a finite  open covering $\{U_\lambda\}$ of $X$ and finitely many   rational functions
$w_{\lambda1},\cdots,w_{\lambda n}$ on $X$ for each $\lambda,$  such that $w_{\lambda1},\cdots, w_{\lambda n}$ are holomorphic on $U_\lambda$  with    
\begin{eqnarray*}
  dw_{\lambda1}\wedge\cdots\wedge dw_{\lambda n}(x)\neq0, & & \  \ ^\forall x\in U_{\lambda}; \\
  D\cap U_{\lambda}=\big{\{}w_{\lambda1}\cdots w_{\lambda h_\lambda}=0\big{\}}, && \  \ ^\exists h_{\lambda}\leq n.
\end{eqnarray*}
In addition, we  can  require that  $\mathscr O(D_j)|_{U_\lambda}\cong U_\lambda\times \mathbb C$ for all 
$\lambda,j.$ On  $U_\lambda,$   write 
$$\Phi=\frac{e_\lambda}{|w_{\lambda1}|^2\cdots|w_{\lambda h_{\lambda}}|^2}
\bigwedge_{k=1}^n\frac{\sqrt{-1}}{\pi}dw_{\lambda k}\wedge d\bar w_{\lambda k},$$
where  $e_\lambda$ is a  positive smooth function on $U_\lambda.$  
Let  $\{\phi_\lambda\}$ be a partition of the unity subordinate to $\{U_\lambda\}.$ Set 
$$\Phi_\lambda=\frac{\phi_\lambda e_\lambda}{|w_{\lambda1}|^2\cdots|w_{\lambda h_{\lambda}}|^2}
\bigwedge_{k=1}^n\frac{\sqrt{-1}}{\pi}dw_{\lambda k}\wedge d\bar w_{\lambda k}.$$
 Again, put $f_{\lambda k}=w_{\lambda k}\circ f.$  On  $f^{-1}(U_\lambda),$ we have  
\begin{eqnarray*}
 f^*\Phi_\lambda&=&
   \frac{\phi_{\lambda}\circ f\cdot e_\lambda\circ f}{|f_{\lambda1}|^2\cdots|f_{\lambda h_{\lambda}}|^2}
   \bigwedge_{k=1}^n\frac{\sqrt{-1}}{\pi}df_{\lambda k}\wedge d\bar f_{\lambda k} \\
   &=& \phi_{\lambda}\circ f\cdot e_\lambda\circ f\sum_{1\leq i_1\not=\cdots\not= i_n\leq m}
   \frac{\Big|\frac{\partial f_{\lambda1}}{\partial z_{i_1}}\Big|^2}{|f_{\lambda 1}|^2}\cdots 
   \frac{\Big|\frac{\partial f_{\lambda h_\lambda}}{\partial z_{i_{h_\lambda}}}\Big|^2}{|f_{\lambda h_\lambda}|^2}
   \left|\frac{\partial f_{\lambda (h_\lambda+1)}}{\partial z_{i_{h_\lambda+1}}}\right|^2 \\
 &&  \cdots\left|\frac{\partial f_{\lambda n}}{\partial z_{i_{n}}}\right|^2 
    \Big(\frac{\sqrt{-1}}{\pi}\Big)^ndz_{i_1}\wedge d\bar z_{i_1}\wedge\cdots\wedge dz_{i_n}\wedge d\bar z_{i_n}.
\end{eqnarray*}
 Fix any  $x_0\in M.$ Take a local holomorphic   coordinate $z=(z_1,\cdots,z_m)$ near $x_0$ and a local  holomorphic   coordinate 
  $\zeta=(\zeta_1,\cdots,\zeta_n)$ 
near $f(x_0)$ such that
$$\alpha|_{x_0}=\frac{\sqrt{-1}}{\pi}\sum_{j=1}^m dz_j\wedge d\bar{z}_j$$
and
$$c_1(L, h)\big|_{f(x_0)}=\frac{\sqrt{-1}}{\pi}\sum_{j=1}^n d\zeta_j\wedge d\bar{\zeta}_j.$$
 Set   
$f^*\Phi_\lambda\wedge\alpha^{m-n}=\xi_\lambda\alpha^m.$
Then, we have  $\xi=\sum_\lambda \xi_\lambda$ and   
 \begin{eqnarray*}
&& \xi_\lambda\big|_{x_0}  \\
&=& \phi_{\lambda}\circ f\cdot e_\lambda\circ f\sum_{1\leq i_1\not=\cdots\not= i_n\leq m}
   \frac{\Big|\frac{\partial f_{\lambda1}}{\partial z_{i_1}}\Big|^2}{|f_{\lambda 1}|^2}\cdots 
   \frac{\Big|\frac{\partial f_{\lambda h_\lambda}}{\partial z_{i_{h_\lambda}}}\Big|^2}{|f_{\lambda h_\lambda}|^2}
   \left|\frac{\partial f_{\lambda (h_\lambda+1)}}{\partial z_{i_{h_\lambda+1}}}\right|^2\cdots\left|\frac{\partial f_{\lambda n}}{\partial z_{i_{n}}}\right|^2 \\
   &\leq&  \phi_{\lambda}\circ f\cdot e_\lambda\circ f\sum_{1\leq i_1\not=\cdots\not= i_n\leq m}
    \frac{\big\|\nabla f_{\lambda1}\big\|^2}{|f_{\lambda 1}|^2}\cdots 
   \frac{\big\|\nabla f_{\lambda h_\lambda}\big\|^2}{|f_{\lambda h_\lambda}|^2} \\
   &&\cdot \big\|\nabla f_{\lambda(h_\lambda+1)}\big\|^2\cdots\big\|\nabla f_{\lambda n}\big\|^2.
\end{eqnarray*} 
Define a non-negative function $\varrho$ on $M$ by  
\begin{equation}\label{wer}
  f^*c_1(L, h)\wedge\alpha^{m-1}=\varrho\alpha^m.
\end{equation}
Moreover, put  $f_j=\zeta_j\circ f$ for $1\leq j\leq n.$  Then    
\begin{equation*}
    f^*c_1(L, h)\wedge\alpha^{m-1}\big|_{x_0}=\frac{(m-1)!}{2}\sum_{j=1}^m\big\|\nabla f_j\big\|^2\alpha^m, 
\end{equation*}
which yields that 
$$\varrho|_{x_0}=(m-1)!\sum_{i=1}^n\sum_{j=1}^m\Big|\frac{\partial f_i}{\partial z_j}\Big|^2 
=\frac{(m-1)!}{2}\sum_{j=1}^n\big\|\nabla f_j\big\|^2.$$
 Put together   the above, we are led to 
$$\xi_\lambda\leq 
\frac{ \phi_{\lambda}\circ f\cdot e_\lambda\circ f\cdot(2\varrho)^{n-h_\lambda}}{(m-1)!^{n-h_\lambda}}\sum_{1\leq i_1\not=\cdots\not= i_n\leq m}
    \frac{\big\|\nabla f_{\lambda1}\big\|^2}{|f_{\lambda 1}|^2}\cdots 
   \frac{\big\|\nabla f_{\lambda h_\lambda}\big\|^2}{|f_{\lambda h_\lambda}|^2}
$$
on $f^{-1}(U_\lambda).$
Since $\phi_\lambda\circ f\cdot e_\lambda\circ f$ is bounded on $M$ and 
$$\log^+\xi\leq \sum_\lambda\log^+\xi_\lambda+O(1),$$ 
 we obtain  
\begin{equation}\label{bbd}
   \log^+\xi\leq O\Big{(}\log^+\varrho+\sum_{k, \lambda}\log^+\frac{\|\nabla f_{\lambda k}\|}{|f_{\lambda k}|}+1\Big{)}. 
 \end{equation}  
 By  Lemma \ref{dynkin} 
\begin{equation}\label{pfirst}
 \frac{1}{2}\int_{\Delta(r)}g_r(o,x)\Delta\log\xi dv
=\int_{\partial\Delta(r)}\log\xi d\pi_r+O(1).
\end{equation}
Combining  (\ref{bbd}) with (\ref{pfirst}) and using Theorem \ref{log1}, we have 
\begin{eqnarray*}
&& \frac{1}{2}\int_{\Delta(r)}g_r(o,x)\Delta\log\xi dv \\
   &\leq& O\bigg{(}\sum_{k,\lambda}m\Big{(}r,\frac{\|\nabla f_{\lambda k}\|}{|f_{\lambda k}|}\Big{)}+\log^+\int_{\partial\Delta(r)}\varrho d\pi_r+1\bigg) \\
   &\leq& O\bigg{(}\sum_{k,\lambda}\log^+ T(r,f_{\lambda k})+\log^+\int_{\partial\Delta(r)}\varrho d\pi_r+1\bigg{)}
   \\
   &\leq& O\bigg{(}\log^+ T_{f}(r,L)+\log^+\int_{\partial\Delta(r)}\varrho d\pi_r+1\bigg{)}.   
   \end{eqnarray*}
Using Theorem \ref{calculus} and (\ref{wer}),  for any $\delta>0,$ there exists a subset $E_\delta\subseteq(0,\infty)$ of finite Lebesgue measure such that 
$$\log^+\int_{\partial\Delta(r)}\varrho d\pi_r 
   \leq (1+\delta)^{2}\log^+T_{f}(r,L)+\log H(r,\delta)+O(1)$$
   holds for all $r>0$ outside $E_\delta.$ 
Hence,  we conclude that  
\begin{equation*}\label{6q}
     \frac{1}{4}\int_{\Delta(r)}g_r(o,x)\Delta\log\xi dv 
 \leq O\left(\log^+ T_{f}(r,L)+\log H(r,\delta)\right)
\end{equation*}
for all $r>0$ outside $E_\delta.$  By this with   (\ref{5q}),  we prove the theorem.
\end{proof}

Recall that the  simple defect  of $f$ with respect to $D$  is  defined  by
 \begin{eqnarray*}
 \bar\delta_f(D)&=&1-\limsup_{r\rightarrow\infty}\frac{\overline{N}_f(r,D)}{T_f(r,L)}.
 \end{eqnarray*}
By  the First Main Theorem, we have    
$$0\leq\bar\delta_f(D)\leq 1.$$

By the use of  Theorem $\ref{main},$ it is immediate that 
\begin{cor}[Defect Relation]\label{dde}  Assume the same conditions as in Theorem $\ref{main}.$ Then 
$$\bar\delta_f(D)
\leq \left[\frac{c_1(K_X^*)}{c_1(L)}\right]-\liminf_{r\rightarrow\infty}\frac{T(r,\mathscr R)}{T_f(r,L)}\leq \left[\frac{c_1(K_X^*)}{c_1(L)}\right], 
$$
if one of the following conditions is satisfied$:$

\noindent $(i)$ $M$ satisfies the volume growth condition 
$$\lim_{r\to\infty}\frac{\log\left(\frac{V(r)}{r^2}\displaystyle\int_r^\infty\frac{tdt}{V(t)}\right)}{\log r}=0;$$
$(ii)$ $f$ is of non-polynomial type growth, i.e., $f$ satisfies the growth condition 
$$\lim_{r\to\infty}\frac{\log r}{T_f(r,L)}=0.$$
\end{cor}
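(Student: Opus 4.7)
The plan is to derive the defect relation directly from the Second Main Theorem by dividing through by $T_f(r,L)$ and taking the appropriate limits, handling the error term separately under each of the two hypotheses.

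First I would start from the SMT inequality
\[ T_f(r,L) + T_f(r,K_X) + T(r,\mathscr R) \leq \overline{N}_f(r,D) + O\!\left(\log^+ T_f(r,L) + \log H(r,\delta)\right) \]
valid for $r \notin E_\delta$, where $E_\delta$ has finite Lebesgue measure. Non-degeneracy of $f$ forces $T_f(r,L) \to \infty$, so after dividing through by $T_f(r,L)$ I can rearrange and pass to the limit. To handle the $T_f(r,K_X)$ term, I would use the infimum definition of $\left[\frac{c_1(K_X^*)}{c_1(L)}\right]$: for any $s > \left[\frac{c_1(K_X^*)}{c_1(L)}\right]$ there exist representatives $\eta \in c_1(K_X^*)$ and $\omega \in c_1(L)$ with $\eta < s\omega$; pulling back by $f$ and integrating against $g_r(o,x)\alpha^{m-1}$ gives $T_f(r,K_X^*) \leq s\,T_f(r,L) + O(1)$, hence $T_f(r,K_X)/T_f(r,L) \geq -s - o(1)$, and letting $s \downarrow \left[\frac{c_1(K_X^*)}{c_1(L)}\right]$ yields $\liminf_{r\to\infty} T_f(r,K_X)/T_f(r,L) \geq -\left[\frac{c_1(K_X^*)}{c_1(L)}\right]$.

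Next I would show the error term is negligible. Writing
\[ H(r,\delta) = \left(\frac{V(r)}{r}\right)^{\delta} \cdot \frac{V(r)}{r^2}\int_r^\infty \frac{t\,dt}{V(t)} \]
and applying Bishop--Gromov in the form $V(r) = O(r^{2m})$ (Corollary \ref{volume}) gives $\log H(r,\delta) \leq (2m-1)\delta\log r + \log\!\Big(\frac{V(r)}{r^2}\int_r^\infty \frac{t\,dt}{V(t)}\Big) + O(1)$. Under hypothesis (ii) the condition $\log r = o(T_f(r,L))$ kills both this bound and $\log^+ T_f$, so the whole error becomes $o(T_f(r,L))$. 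Under hypothesis (i) the volume-growth assumption reduces the second summand to $o(\log r)$, giving $\log H(r,\delta) \leq \big((2m-1)\delta + o(1)\big)\log r$; choosing $\delta$ arbitrarily small at the end of the argument renders the error contribution arbitrarily close to zero.

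Taking $\liminf_{r\to\infty,\, r\notin E_\delta}$ of the normalized LHS and $\limsup$ of the normalized RHS, and using that $E_\delta$ has finite measure together with the monotonicity of $T_f$ and $\overline{N}_f$ in $r$ (so the limits over $r \notin E_\delta$ match those over all $r \to \infty$), I obtain
\[ 1 - \left[\frac{c_1(K_X^*)}{c_1(L)}\right] + \liminf_{r\to\infty} \frac{T(r,\mathscr R)}{T_f(r,L)} \leq 1 - \bar\delta_f(D), \]
which rearranges into the first inequality of the corollary; the second inequality follows because $\mathscr R \geq 0$ implies $T(r,\mathscr R) \geq 0$, so the $\liminf$ is nonnegative. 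The main obstacle is the error analysis under hypothesis (i): since $\delta > 0$ must be fixed when invoking the SMT, the $\delta$-dependence of both the implicit constants and the exceptional set $E_\delta$ has to be tracked carefully so that letting $\delta \downarrow 0$ in the concluding inequality is legitimate.
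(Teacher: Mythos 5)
Your proposal is correct and follows essentially the same route as the paper: bound $\log H(r,\delta)$ via the Bishop--Gromov estimate $V(r)\leq\omega_{2m}r^{2m}$ together with hypothesis $(i)$ or $(ii)$, divide the Second Main Theorem by $T_f(r,L)$, use that a non-constant $f$ has $T_f(r,L)$ growing at least like $\log r$, and let $\delta\downarrow 0$. The additional details you supply (the infimum argument for the $K_X$ term, positivity of $T(r,\mathscr R)$, and the exceptional-set bookkeeping) are exactly the standard steps the paper leaves implicit.
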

\begin{proof}  We first show that the corollary  holds if  $(i)$ is satisfied.   By  Corollary \ref{volume}, we have $V(r)\leq \omega_{2m}r^{2m}.$   Thus, we are led to  that  
 \begin{eqnarray*} 
\frac{\log H(r,\delta)}{\log r}&=& \frac{\log\left(\frac{V(r)}{r^2}\displaystyle\int_r^\infty\frac{tdt}{V(t)}\right)}{\log r}+\frac{\delta\log\frac{V(r)}{r}}{\log r} \\
&\leq& \frac{\log\left(\frac{V(r)}{r^2}\displaystyle\int_r^\infty\frac{tdt}{V(t)}\right)}{\log r}+(2m-1)\omega_{2m}\delta. 
\end{eqnarray*}
Using the condition $(i),$  we obtain  
$$\limsup_{r\to\infty}\frac{\log H(r,\delta)}{\log r}\leq (2m-1)\omega_{2m}\delta.$$
Since each  non-constant meromorphic mapping has   growth    $O(\log r)$ at least,    we have the theorem proved if  $(i)$ is satisfied  due to  Theorem \ref{main}. 
For $r>1,$ we obtain 
 \begin{eqnarray*} 
H(r,\delta)&=& \frac{1}{r}\left(\frac{V(r)}{r}\right)^{1+\delta}\int_{r}^\infty\frac{tdt}{V(t)} \\
&\leq& \frac{1}{r}\left(\frac{\omega_{2m}r^{2m}}{r}\right)^{1+\delta}\int_{1}^\infty\frac{tdt}{V(t)}  \\
&\leq& c\omega_{2m}^{1+\delta}r^{(2m-1)(1+\delta)-1},
 \end{eqnarray*} 
 where 
 $$c=\int_{1}^\infty \frac{tdt}{V(t)}.$$
Thus, it yields from $(ii)$ that 
$$\lim_{r\to\infty}\frac{\log H(r,\delta)}{T_f(r,L)}=0.$$
This completes the proof. 
 \end{proof}

\vskip\baselineskip

\end{document}